\documentclass[11pt]{amsart}

\usepackage{amsfonts}
\usepackage{amsthm}
\usepackage{amssymb}
\usepackage{amsmath}
\usepackage{url, hyperref}
\usepackage{graphicx}
\usepackage[english]{babel}
\usepackage[margin=1.25in]{geometry}

\newtheorem{theo}{\bf Theorem}[section]
\newtheorem{lemma}{\bf Lemma}[section]

\newcommand{\spn}{{\rm span}}

\newcommand{\Z}{{\mathbb Z}}
\newcommand{\N}{{\Bbb N}}
\newcommand{\Q}{{\Bbb Q}}
\newcommand{\R}{{\mathbb R}}

\newcommand{\conv}{\rm conv\; }

\newcommand{\bea}{\begin{eqnarray*}}
\newcommand{\eea}{\end{eqnarray*}}
\newcommand{\be}{\begin{eqnarray}}
\newcommand{\ee}{\end{eqnarray}}

\newcommand{\vol}{\mbox{\rm vol}\,}
\newcommand{\lin}{\mbox{lin}\,}
\newcommand{\ve}{\boldsymbol}

\newcommand{\interior}{\mbox{\rm int}\,}
\newcommand{\frob}{\mathrm{F}}
\newcommand{\dfrob}{\mathrm{g}}

\numberwithin{equation}{section}

\begin{document}

\title[Integer Points in Knapsack Polytopes]{Integer Points in Knapsack Polytopes\\ and\\ $s$-covering Radius}
 %and $s$-covering Radii of Knapsack Polytopes }
 \author{Iskander Aliev}
 \address{School of Mathematics and Wales Institute of Mathematical and Computational Sciences, Cardiff University, Senghennydd Road, CARDIFF, Wales, UK}
 \email{alievi@cf.ac.uk}

 \author{Martin Henk}
\address{Fakult\"at f\"ur Mathematik, Otto-von-Guericke
 Universit\"at Mag\-deburg, Universit\"atsplatz 2, D-39106-Magdeburg,
 Germany}
\email{martin.henk@ovgu.de, eva.linke@ovgu.de}

\author{Eva Linke}

%\subjclass[2000]{????}
%\keywords{Knapsack problem, Frobenius number, successive minima, inhomogeneous minimum, distribution of lattices}

\begin{abstract}
 Given a matrix  $A\in \Z^{m\times n}$ satisfying certain
 regularity assumptions, we consider for a positive integer $s$ the set ${\mathcal F}_s(A)\subset \Z^m$ of all
 vectors  ${\ve b}\in \Z^m$ such that the associated {\em knapsack polytope}
\bea
P(A,{\ve b})=\{{\ve x}\in \R^n_{\ge 0}: A {\ve x}={\ve b}\}\,
\eea
contains at least $s$ integer points. In this paper we investigate the structure of the set ${\mathcal
  F}_s(A)$ using the concept of $s$-covering radius. In particular, in the case $m=1$ we prove an optimal
lower bound for the $s$-Frobenius number, which is the largest integer $b$ such that $P(A,b)$ contains less than $s$ integer points.
\end{abstract}

\keywords{Knapsack polytope, Frobenius numbers, inhomogeneous minimum, covering radius, successive minima}

\subjclass[2010]{Primary: 90C10, 52C07, 11D07; Secondary:
  90C27, 11H06}

\maketitle

%\marginpar{\tiny Shall we mention the (speical) $m=1$ case in the abstract? }

\section{Introduction and statement of results}
\label{intro}
Let $A\in\Z^{m\times n}$, $1\leq m<n$, be an integral
$m\times n$ matrix satisfying
\begin{equation}
\begin{split}
{\rm i)}&\,\, \gcd\left(\det(A_{I_m}) : A_{I_m}\text{ is an $m\times
    m$ minor of }A\right)=1, \\
{\rm ii)}&\,\, \{{\ve x}\in\R^n_{\ge 0}: A\,{\ve x}={\ve 0}\}=\{{\ve 0}\}
\end{split}
\label{assumption}
\end{equation}
and let ${\ve b}\in \Z^m$. The {\em knapsack
  polytope} $P(A,{\ve b})$ is defined as
\be
P(A,{\ve b})=\{{\ve x}\in \R^n_{\ge 0}: A {\ve x}={\ve b}\}\,.
\label{P}
\ee
Note that on account of \eqref{assumption} ii), $P(A,{\ve b})$ is
indeed a polytope (or empty).

Let $s$ be a positive integer. Consider the set ${\mathcal F}_s(A)$ of integer vectors ${\ve b}$ such
that the knapsack polytope (\ref{P}) contains at least $s$ integer points, i.e.,
\begin{equation*}
{\mathcal F}_s(A)=\{{\ve b}\in\Z^m : \#(P(A,{\ve b})\cap\Z^n)\ge s\}.
\end{equation*}
Integer points in knapsack polytopes are a classical topic of study in combinatorics (see e.g. \cite{Alf1} and \cite{Alf}) and in the integer linear programming (see e.g. \cite{AH_Siam} and \cite{ASch}).
In this paper we will study the combinatorial structure of 
${\mathcal F}_s(A)$ using  results from Minkowski's geometry of
numbers and discrete geometry (see e. g. \cite{peterbible} and
\cite{GrLek}).%\marginpar{\tiny there are many ``the´´in the sentence.}

First we address the
special case $m=1$.
The matrix $A$ now has the form $A={\ve a}^\intercal$, where ${\ve a}=(a_1,
a_2, \ldots, a_n)^\intercal$ is an integer vector  and  \eqref{assumption} i) says that
 $\gcd(a_1, a_2,
\ldots, a_n)=1$, i.e., ${\ve a}$ is a {\em primitive} integer vector.
Due to the second assumption \eqref{assumption} ii)
we may assume that all entries of ${\ve a}$ are positive.  The set ${\mathcal F}_s({\ve a}^\intercal)$ in this case contains all consecutive integers larger than the
$s$-{\em Frobenius number}
\bea
\frob_s({\ve a}):=\max\{b\in \Z: \#(P({\ve a}^\intercal, b)\cap \Z^n)<s \}\,
\eea
associated with vector ${\ve a}$.
The $s$-Frobenius numbers have been introduced and studied by
Beck and Robins in \cite{BeckRobins}. For more recent results please
see \cite{AFH},
\cite{BeckKifer}, \cite{FS} and \cite{ShallitStankewicz}.
Thus when $m=1$ we have
\begin{equation}
\interior(\frob_s({\ve a})+\R_{\ge 0})\cap\Z =\frob_s({\ve a})+\Z_{>0}\subset {\mathcal F}_s({\ve
  a}^\intercal),
\label{eq:frob_cone}
\end{equation}
where $\interior(\cdot)$ denotes the interior of the set.

Note that $\frob_1({\ve a})$ is the classical {\em Frobenius number} associated with the integers $a_1,
a_2, \ldots, a_n$. The general problem of
finding $\frob_1({\ve a})$,  the
{\em Frobenius problem}, is well known to be NP-hard (Ram\'{\i}rez
Alfons\'{\i}n \cite{Alf1, Alf}). Kannan \cite{Kannan} proved that for fixed $n$ the Frobenius problem can be solved in polynomial time.
The proof is based on results from the algorithmic geometry of numbers and the integer programming.

%On the other hand, in the works of Aardal and Lenstra
%\cite{Aardal_Lenstra}, Hansen and Ryan \cite{Hansen_Ryan}, and Lee,
%Onn and Weismantel \cite{Lee_Onn_Weismantel} Frobenius numbers are used in the analysis of integer
%programming algorithms.

In practice, computing Frobenius numbers remains a difficult computational problem even when dimension $n$ is fixed. Thus the upper
and lower bounds for  $\frob_1({\ve a})$ are of special interest.  In
terms of the input vector ${\ve a}$,
all known upper bounds for $\frob_1({\ve a})$ (see, e.g., Erd\"os and Graham \cite{EG} and Fukshansky and Robins \cite{Lenny}) can be represented in the form
\be
\frob_1({\ve a}) \leq {\rm c}_n\, {\ve a}^\intercal {\ve a},
\label{general_upper_bound}
\ee
for a certain constant ${\rm c}_n$ depending on the dimension.
The quadratic order of
(\ref{general_upper_bound}) with respect to ${\ve a}$ is in general
best possible  (see, e.g., \cite{AH_Siam} and
\cite{Arnold06}).

On the other hand, Aliev and Gruber \cite{AG} proved that Frobenius numbers possess an
optimal lower bound. Indeed
\begin{equation}
\frob_1({\ve a})\ge \vartheta_1(S_{n-1}) \left(\prod_{i=1}^n a_i\right)^\frac{1}{n-1}-\sum_{i=1}^n a_i
\,,
\label{bounds_lower}
\end{equation}
where  $\vartheta_1(S_{n-1})$ is the {\em absolute inhomogeneous
  minimum} of an $(n-1)$-dimensional simplex $S_{n-1}$.

Interestingly, the $s$-Frobenius number can be bounded from above by $\frob_1({\ve a})$ plus a term of the same order in ${\ve a}$ as the main term in the lower bound (\ref{bounds_lower}).
The following upper bound on the $s$-Frobenius number was
recently obtained in \cite{AFH}
\begin{equation*}
\begin{split}
\frob_s({\ve a})&\leq
\frob_1({\ve a}) + (s-1)^\frac{1}{n-1}\,\left((n-1)!\prod_{i=1}^n a_i\right)^\frac{1}{n-1}.
%\frob_s(a)&\geq
%s^\frac{1}{n-1}\,\left((n-1)!\prod_{i=1}^n a_i\right)^\frac{1}{n-1}-\sum_{i=1}^n a_i.
\end{split}
\end{equation*}

The first goal of the present paper is to obtain an optimal lower bound for  $\frob_s({\ve a})$ that applies for all $s$.
To this end, we will need a generalization of the quantity $\vartheta_1(S_{n-1})$ for a slightly more general
setting.

By a {\em lattice} $L\subset \R^n$ we will understand a discrete submodule of
the Euclidean space $\R^n$. Given an $d$-dimensional lattice $L$, its {\em determinant} $\det(L)$ is the $d$-dimensional volume (i.e. $d$-dimensional
Lebesgue measure) of the parallelepiped spanned by the vectors of a basis of $L$. By a {\em convex body} $K \subset\spn_\R(L)$ we will understand a compact convex set with nonempty interior
in the Euclidean space $\spn_\R(L)\subset \R^n$ spanned by the lattice $L$.

For a lattice
$L\subset\R^n$ and a convex body $K\subset\spn_\R(L)$ let
%\marginpar{\tiny too many lines for the forumula, or?}
\begin{equation}
\begin{split}
 \mu_s(K,L)  =\min\{\mu > 0 : &\text{ for all } {\ve x}\in\spn_\R(L) \text{ there
   exist distinct } {\ve y}_1,\dots,{\ve y}_s\in L \\ &\text{ such that } {\ve x}\in
 {\ve y}_i+\mu K\ \forall\ 1\leq i\leq s\}
\end{split}
\label{eq:def_covering}
\end{equation}
be  the smallest
positive number such that any ${\ve x}\in\spn_\R(L)$ is covered by at least $s$
lattice translates of $\mu_s K$. $\mu_s(K,L)$ is called the
\emph{$s$-covering radius} of $K$ with respect to $L$.
For $s=1$ we get the well-known covering radius for which we refer to
\cite{peterbible} and \cite{GrLek}.

Following Gruber \cite{Peter}, given a convex body $K\subset \R^n$ we
define %\marginpar{\tiny before everything was definied in dimension $d$?}
\bea
\vartheta_s(K)=\inf\frac{\mu_s(K,L)}{\det(L)^{1/n}}\,,
\eea
where the infimum is taken over all $n$-dimensional lattices $L$ in $\R^n$. We call the quantity $\vartheta_s(K)$
the {\em absolute $s$-inhomogeneous minimum}. For $s=1$ we get the
classical absolute inhomogeneous minimum used in
\eqref{bounds_lower}.

Our first theorem shows that
\eqref{bounds_lower} can be generalized to an optimal lower bound on the $s$-Frobenius number.

\begin{theo}\hfill
\begin{itemize}
\item[i)]
Let $n\geq 3$, $s\geq 1$. Then
\begin{equation}
\frob_s({\ve a})\ge \vartheta_s(S_{n-1}) \left(\prod_{i=1}^n a_i\right)^\frac{1}{n-1}-\sum_{i=1}^n a_i
\,.
\label{eq:bounds_generalized_lower}
\end{equation}

\item[ii)] For any $\epsilon>0$ and ${\ve \alpha}=(\alpha_1, \ldots, \alpha_{n-1})\in \R^{n-1}$
with $0<\alpha_1\le \cdots \le \alpha_{n-1}$, there exists a vector ${\ve a}=(a_1, \ldots, a_n)\in \Z^n$
such that
\be \left | \,\alpha_i-\frac{a_i}{a_n}\right |<\epsilon
\,,\;\;\;i=1,2,\ldots,n-1\label{Density}\ee
and
\be \frac{\frob_s({\ve a})+\sum_{i=1}^n a_i}{(a_1 \cdots a_{n})^{1/(n-1)}}<
\vartheta_s(S_{n-1})+\epsilon\,. \label{Sharpness}\ee

\end{itemize}
\label{thm:main}
\end{theo}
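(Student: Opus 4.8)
The plan is to connect the $s$-Frobenius number $\frob_s({\ve a})$ to the $s$-covering radius of the simplex
\[
S_{n-1} = \conv\{{\ve 0}, {\ve e}_1, \dots, {\ve e}_{n-1}\} \subset \R^{n-1}
\]
(or an appropriate affine image of it) with respect to a suitable lattice $L_{\ve a}$ built from ${\ve a}$. Observe that $P({\ve a}^\intercal, b) \cap \Z^n$ has fewer than $s$ points precisely when, after projecting onto the last $n-1$ coordinates (using $x_n = (b - a_1 x_1 - \dots - a_{n-1}x_{n-1})/a_n$), the point $(b/a_n)\cdot(\text{something})$ fails to be covered $s$ times by lattice translates of a dilate of $S_{n-1}$. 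More precisely, I would identify $P({\ve a}^\intercal, b)$ with a dilated simplex $b \cdot \Delta$ in $\R^{n-1}$, whose integer points correspond to points of a lattice $\Lambda$ with $\det(\Lambda) = (a_1 \cdots a_n)/\text{(something)}$, and track when the count first stabilizes at $\geq s$.

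For part i), the key steps are: (1) set up the bijection between $\Z^n \cap P({\ve a}^\intercal,b)$ and the points of a lattice $L$ inside a simplex of size proportional to $b + \sum a_i$; (2) translate the condition ``$\#(P({\ve a}^\intercal, b)\cap\Z^n) < s$'' into ``the center of that simplex is covered fewer than $s$ times by $L$-translates of a scaled copy of $S_{n-1}$''; (3) conclude that if $b$ is large enough that $P({\ve a}^\intercal,b)$ is guaranteed $s$ points, then the scaling factor must exceed $\mu_s(S_{n-1}, L)$, and since $\det(L)$ is explicitly $\prod a_i$ up to normalization, the definition of $\vartheta_s(S_{n-1})$ as an infimum over all lattices gives the stated inequality \eqref{eq:bounds_generalized_lower}. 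The arithmetic of normalizing volumes and determinants — getting the exponent $1/(n-1)$ and the shift $\sum_{i=1}^n a_i$ exactly right — is the fiddly but routine part.

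For part ii), the strategy is an approximation/density argument: the infimum defining $\vartheta_s(S_{n-1})$ is attained (or approached) by some lattice $L^*$; one then needs to realize a rational lattice close to $L^*$ that arises from an actual integer vector ${\ve a}$ whose coordinate ratios $a_i/a_n$ approximate the prescribed $\alpha_i$. This requires choosing ${\ve a}$ so that the associated lattice $L_{\ve a}$ (suitably rescaled) lies $\epsilon$-close to a near-optimal lattice, while simultaneously respecting \eqref{Density}. The standard device is to take $a_n$ large and pick $a_i \approx \alpha_i a_n$, then perturb within an allowed range to hit a lattice with nearly minimal $\mu_s/\det^{1/(n-1)}$; continuity of $\mu_s(S_{n-1}, \cdot)$ in the lattice (with respect to, say, the Hausdorff-type metric on lattices of fixed determinant) then yields \eqref{Sharpness}.

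The main obstacle I anticipate is establishing the continuity and approximability statements needed for part ii): one must show that $s$-covering radii behave continuously under small perturbations of the lattice — which is true but needs care, since $\mu_s$ is a min of a somewhat delicate covering condition — and that the family of lattices realizable from integer vectors ${\ve a}$ with prescribed approximate ratios is dense enough (after rescaling) among all lattices of a given determinant. A secondary subtlety is that the bijection in part i) between integer points of the knapsack polytope and lattice points of $L$ is not canonical; one must choose the splitting of coordinates (which variable to eliminate) and the resulting lattice carefully so that $\det(L)$ comes out to be exactly $\left(\prod_{i=1}^n a_i\right)^{1/(n-1)}$ after normalization, and so that the ``size'' of the relevant simplex is exactly $b + \sum a_i$ rather than $b$ plus some other linear form. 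Getting these constants to match the claimed bound exactly, for all $s$ and all $n \geq 3$, is where the real work lies; the geometry-of-numbers input (the definition of $\vartheta_s$ as an infimum) then does the rest almost formally.
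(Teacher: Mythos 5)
Your approach is essentially the paper's; both parts follow the same skeleton. But two places you dismiss as ``fiddly but routine'' or ``the standard device'' are in fact the crux, and you should be aware that they are substantial cited results rather than exercises.

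For part i), the bijection and normalization you sketch in steps (1)--(3) are packaged in the paper into one precise identity from \cite{AFH} (generalizing Kannan's $s=1$ formula): with $S_{\ve a}=\{{\ve x}\in\R^{n-1}_{\ge 0}: a_1x_1+\dots+a_{n-1}x_{n-1}\le 1\}$ and $L_{\ve a}=\{{\ve z}\in\Z^{n-1}: a_1z_1+\dots+a_{n-1}z_{n-1}\equiv 0\ (\mathrm{mod}\ a_n)\}$, one has
\begin{equation*}
\mu_s(S_{\ve a},L_{\ve a})=\frob_s({\ve a})+a_1+\dots+a_n,
\end{equation*}
with $\det L_{\ve a}=a_n$. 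The rest is exactly the normalization you describe: set $\alpha_i=a_i/a_n$, rescale to $S_{\ve\alpha}=a_nS_{\ve a}$ and $L_u=a_n^{-1/(n-1)}L_{\ve a}$ (determinant 1), use $\vartheta_s(S_{\ve\alpha})\le\mu_s(S_{\ve\alpha},L_u)$, and note that $(\alpha_1\cdots\alpha_{n-1})^{1/(n-1)}S_{\ve\alpha}$ is a determinant-1 linear image of $S_{n-1}$. This identity is the load-bearing step; without it (or a proof of it) part i) is not complete.

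For part ii), the two ``obstacles'' you correctly anticipate are exactly the two external theorems the paper invokes. The continuity of $\mu_s$ under simultaneous convergence of bodies and lattices is Gruber's Satz 1 (\cite{Peter}), stated as Lemma 3.1 in the paper; one subtle point you do not mention is that one must first translate the simplices so they share an interior point, since the convergence notion uses distance functions. The density/realizability claim --- that for a given rational lattice $L_\epsilon$ of determinant 1 and given ratios $\alpha_i$, one can construct integer vectors ${\ve a}(t)$ with $a_i(t)/a_n(t)\to\alpha_i$ and $a_n(t)^{-1/(n-1)}L_{{\ve a}(t)}\to L_\epsilon$ --- is Theorem 1.2 of Aliev and Gruber \cite{AG}, quoted as Theorem 3.1 in the paper. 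Calling this ``the standard device'' understates it: the two constraints (prescribed ratios and prescribed limit lattice) must be satisfied simultaneously, and it is not obvious a priori that the map ${\ve a}\mapsto L_{\ve a}$ is flexible enough to do this; the cited theorem is what makes the argument go through. With those two results explicitly invoked, your outline becomes the paper's proof.
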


Note that the part ii) shows that integer vectors with
relatively small Frobenius numbers approximate any given ``direction'' ${\ve \alpha}\in \R^{n-1}$.

In the general case, the structure of the set ${\mathcal F}_s(A)$, apart from a few special cases, is not well understood. It is well known that the set ${\mathcal F}_1(A)$ can be decomposed into the set of all integer points in the interior of a certain translated cone and a complex complementary set. More recent results  (see \cite{AH_Siam}) attempt to estimate the ``location'' of such a cone in terms of $\det (AA^\intercal)$.
Following the approach of \cite{AH_Siam},  we will define a suitable generalization of the Frobenius number.

Let $C$ be the cone generated by the columns of $A$ and let ${\ve
  v}=A\,{\ve 1}$, where ${\ve 1}$ is the all-$1$-vector.
By the {\em diagonal $s$-Frobenius number} $\dfrob_s(A)$ {\em of} $A$
we understand the minimal $t\ge 0$, such that for all integer vectors ${\ve b}$
in the interior of the translated cone $t{\ve v}+C$ the knapsack polytope $P(A, {\ve b})$ contains at least $s$ integer points, i.e.,
\begin{equation*}
\dfrob_s(A)=\min\left\{t\in\R_{\geq 0} : \mathrm{int}(t{\ve v}+C)\cap\Z^m\subseteq {\mathcal F}_s(A)\right\}.
\end{equation*}
Note that the elements of set ${\mathcal F}_s(A)$ are exactly the vectors ${\ve b}$ such that the integer programming feasibility problem:
\be
\mbox{Does}\; P(A,{\ve b})\; \mbox{contain at least}\; s\;\mbox{integer vectors?}
\label{Knapsack}
\ee
is solved in affirmative. When $s=m=1$, the problem is often called  {\em integer knapsack problem} and is well-known to be NP-complete (see e.g. Section 15.6 in Papadimitriou and Steiglitz \cite{PS}).
Therefore, given any upper bound $\dfrob_s(A)\le f(A)$ that can be
computed in polynomial time, the complimentary set ${\mathcal
  F}_s(A)\setminus ( \mathrm{int}(f(A){\ve v}+C)\cap \Z^m)$ is likely
to have a complex combinatorial structure. Roughly speaking, from the
integer programming point of view, the upper bounds attempt to
determine as small as possible set that contains all ``computationally hard'' instances $A, {\ve b}$.

The next result of this paper gives an upper bound on $\dfrob_s(A)$ in terms of $s$ and $\det (AA^\intercal)$.
\begin{theo} Let $n>m\ge 1$, $s\ge 1$. Then 
\be
\dfrob_s(A)\leq
\frac{n-m}{2}\sqrt{\det(AA^\intercal)} + \frac{(s-1)^\frac{1}{n-m}}{2}\,\left(\sqrt{\det(AA^\intercal)}\right)^\frac{1}{n-m}. \label{upper_bound_for_dFN}
\ee
\label{upper_bound}
\end{theo}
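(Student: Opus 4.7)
The plan is to interpret $\dfrob_s(A)$ as an $s$-covering radius of a convex body in the $(n{-}m)$-dimensional lattice $\Lambda_A:=\Z^n\cap\ker(A)$, whose determinant is $\det(\Lambda_A)=\sqrt{\det(AA^\intercal)}$, and then to bound that covering radius by combining Jarnik's inequality, Minkowski's second theorem, Vaaler's cube-slicing inequality and a van der Corput-type lattice point count.

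First I would carry out a geometric reduction. Let ${\ve b}\in\interior(t{\ve v}+C)\cap\Z^m$. Writing ${\ve b}-t{\ve v}=A{\ve y}$ for some ${\ve y}>{\ve 0}$ and setting ${\ve x}^*=t{\ve 1}+{\ve y}$, I obtain a real solution with $A{\ve x}^*={\ve b}$ and ${\ve x}^*>t{\ve 1}$. Fix an integer solution ${\ve x}_0\in\Z^n$ of $A{\ve x}={\ve b}$, which exists by \eqref{assumption}; then the integer points of $P(A,{\ve b})$ are the ${\ve x}_0+{\ve z}$ with ${\ve z}\in\Lambda_A$ and ${\ve x}_0+{\ve z}\geq{\ve 0}$. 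Letting ${\ve u}={\ve x}_0-{\ve x}^*\in\ker(A)$ and $\Pi:=\{{\ve z}\in\ker(A):{\ve z}\geq-{\ve 1}\}$, the sufficient condition ${\ve z}\in-{\ve u}+t\Pi$ yields ${\ve x}_0+{\ve z}={\ve x}^*+({\ve z}+{\ve u})\geq{\ve x}^*-t{\ve 1}>{\ve 0}$, and assumption \eqref{assumption} ii) makes $\Pi$ a bounded convex body in $\ker(A)$. So it suffices to guarantee $|\Lambda_A\cap(-{\ve u}+t\Pi)|\geq s$ for every ${\ve u}\in\ker(A)$, which is exactly $t\geq\mu_s(-\Pi,\Lambda_A)$; hence $\dfrob_s(A)\leq\mu_s(-\Pi,\Lambda_A)$.

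Next I would split the covering radius. Let $T:=\Pi\cap(-\Pi)=\ker(A)\cap[-{\ve 1},{\ve 1}]^n$, an origin-symmetric convex body in $\ker(A)$ contained in both $\Pi$ and $-\Pi$. Using $T\subset\Pi$ and the convex-body identity $\alpha\Pi+\beta\Pi=(\alpha+\beta)\Pi$ for $\alpha,\beta\geq 0$, a direct argument (combine a covering lattice point from $\mu_1(-\Pi,\Lambda_A)\Pi$ with $s$ distinct lattice points of $\rho_sT\subset\rho_s\Pi$) establishes the split
\[
\mu_s(-\Pi,\Lambda_A)\leq\mu_1(-\Pi,\Lambda_A)+\rho_s,
\]
where $\rho_s$ is the smallest $\rho>0$ with $|\rho T\cap\Lambda_A|\geq s$. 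For the first piece, $\mu_1(-\Pi,\Lambda_A)\leq\mu_1(T,\Lambda_A)$ (since $T\subset-\Pi$), and Jarnik's inequality gives $\mu_1(T,\Lambda_A)\leq\tfrac{1}{2}\sum_i\lambda_i$ with $\lambda_i=\lambda_i(T,\Lambda_A)$. Minkowski's second theorem yields $\prod_i\lambda_i\cdot\vol(T)\leq 2^{n-m}\det(\Lambda_A)$, Vaaler's cube-slicing inequality yields $\vol(T)\geq 2^{n-m}$, and integrality of $\Lambda_A\subset\Z^n$ forces every $\lambda_i\geq 1$. Chained, these give $\lambda_{n-m}\leq\prod_i\lambda_i\leq\det(\Lambda_A)=\sqrt{\det(AA^\intercal)}$, and therefore $\mu_1(-\Pi,\Lambda_A)\leq\tfrac{n-m}{2}\sqrt{\det(AA^\intercal)}$.

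For the second piece, $\rho_s\leq\tfrac{1}{2}(s-1)^{1/(n-m)}(\sqrt{\det(AA^\intercal)})^{1/(n-m)}$ should follow from a van der Corput-type lattice-point estimate in $\rho_s T$ combined with $\vol(T)\geq 2^{n-m}$. Here lies the main obstacle: the textbook form of van der Corput only yields $\rho_s\leq 2^{-1/(n-m)}(s-1)^{1/(n-m)}(\sqrt{\det(AA^\intercal)})^{1/(n-m)}$, off by a factor $2^{1-1/(n-m)}$ from what is claimed. Matching the constant $1/2$ exactly will require either a sharper lattice-point count tailored to the cube slice $T$ (possibly exploiting its asymmetric enlargement $-\Pi\supset T$ in the counting step) or an alternative argument that bypasses the split and bounds $\mu_s(-\Pi,\Lambda_A)$ directly. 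Once that last estimate is in place, summing the two pieces delivers the advertised bound on $\dfrob_s(A)$.
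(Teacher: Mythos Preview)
Your geometric reduction $\dfrob_s(A)\leq\mu_s(-\Pi,\Lambda_A)=\mu_s(\Pi,\Lambda_A)$ is exactly the paper's Lemma~\ref{lem:inhom_min}, and your bound $\mu_1(\Pi,\Lambda_A)\leq\tfrac{n-m}{2}\sqrt{\det(AA^\intercal)}$ via $T\subset\Pi$, Jarnik, Minkowski's second theorem, Vaaler's $\vol T\geq 2^{n-m}$, and the integrality observation $\lambda_i(T,\Lambda_A)\geq 1$ is exactly the paper's Lemma~\ref{improved_upper_bound}. These parts are correct and match the paper.

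The obstacle you flag in the second term is genuine for the split you chose, but it is self-inflicted: passing through a lattice-point count $\rho_s$ in $\rho T$ and then invoking van der Corput cannot recover the factor $1/2$. The paper does not go through $\rho_s$ at all. It applies the inequality \eqref{lem:s-covering} from \cite{AFH},
\[
\mu_s(K,L)\;\leq\;\mu_1(K,L)+(s-1)^{1/d}\left(\frac{\det L}{\vol K}\right)^{1/d},
\]
directly with $K=\Pi=P(A,{\ve v})-{\ve 1}$ and $d=n-m$. The second summand then carries $\vol(\Pi)$ rather than a lattice-point count, and since $\Pi\supset T$ the very same Vaaler bound $\vol(\Pi)\geq 2^{n-m}$ you already used for the $\mu_1$ estimate turns that summand into exactly $\tfrac{1}{2}(s-1)^{1/(n-m)}\bigl(\sqrt{\det(AA^\intercal)}\bigr)^{1/(n-m)}$. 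So the fix is simply to replace your $\rho_s$-split by \eqref{lem:s-covering}; no sharper counting argument is needed.
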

When $s=1$, \eqref{upper_bound_for_dFN} improves Theorem 1.1 in Aliev
and Henk \cite{AH_Siam} by the factor
$\frac{1}{2}n^{-\frac{1}{2}}$. The proof of the theorem is based on estimating $\dfrob_s(A)$
using the $s$-covering radius (\ref{eq:def_covering}).

The problem of estimating the quantity $\dfrob_s(A)$ from below appears to be harder. The natural normalization $(\prod_{i=1}^n a_i)^\frac{1}{n-1}$ in (\ref{eq:bounds_generalized_lower}) reflects the fact that in the case $m=1$
the knapsack polytope $P({\ve a}^\intercal, b)$ is always a simplex. In the general case, $P(A,{\ve b})$ may have a complex combinatorial structure even in the special choice ${\ve b}={\ve v}$. Thus $\dfrob_s(A)$ does not seem to possess a lower bound with {\em simple} natural normalization that applies for all $s$ and matrices $A$ of any size. In this paper we tackle this problem by using lower estimates for the $s$-covering radius. The following theorem gives the first ever lower bound for $\dfrob_s(A)$ in the general case.

\begin{theo} Let $n>m\geq 1$ and $s\geq 1$.
 Then
\bea
\begin{split}
\dfrob_s(A) \geq \frac{1}{n-m+1}\,
\Bigg(s^{\frac1{n-m}}\det(AA^T)^{\frac1{n-m}}\left(\frac{m(A)}{M(A)}\right)^2\sqrt{\frac{n-m}{n}}\\
\quad\quad\quad\quad\quad\quad\quad\quad-(n-m)-2\,\frac{m(A)}{M(A)}\Bigg),\notag \label{lower_bound_for_dFN}
\end{split}
\eea 
where $m(A)$ and $M(A)$ denote the minimal and maximal absolute $m$-minors of $A$.
\label{thm:lower_bound}
\end{theo}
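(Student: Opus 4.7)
The plan is to bound $\dfrob_s(A)$ from below in terms of the $s$-covering radius $\mu_s(K,\Lambda)$ of a suitable body--lattice pair, and then to estimate $\mu_s(K,\Lambda)$ from below by a volume argument. For the setup, let $\Lambda=\{\ve z\in\Z^n:A\ve z=\ve 0\}$; by \eqref{assumption} i) this is an $(n-m)$-dimensional lattice in $\R^n$ with $\det\Lambda=\sqrt{\det(AA^\intercal)}$. Let $U=\spn_\R\Lambda=\ker A$ and set
\[
K=\{\ve y\in U:\ve y\ge-\ve 1\},
\]
a convex body in $U$ by \eqref{assumption} ii); its translate $K+\ve 1$ equals the knapsack polytope $P(A,\ve v)$. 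The basic observation is that for any $\ve b\in\Z^m$ with integer solution $\ve x_0\in\Z^n$ of $A\ve x_0=\ve b$, one has $|P(A,\ve b)\cap\Z^n|=|\{\ve z\in\Lambda:\ve z\ge-\ve x_0\}|$, and after the substitution $\ve y=-\ve z$ this count equals $|\Lambda\cap(\ve x^*-\mu K)|$ for appropriate $\ve x^*\in U$ and $\mu>0$ determined by $\ve b$.

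The key reduction to $\mu_s(K,\Lambda)$: given a witness $\ve x^*\in U$ with $|\Lambda\cap(\ve x^*-\mu K)|<s$ (i.e.\ $\mu<\mu_s(K,\Lambda)$), I would construct a bad integer right-hand-side $\ve b^*\in\interior(t\ve v+C)\cap\Z^m$ with $|P(A,\ve b^*)\cap\Z^n|<s$ by rounding $\ve x^*+\mu\ve 1$ to a suitable integer $\ve x_0\in\Z^n$ and then adjusting by $A\ve\eta$ with $\ve\eta>\ve 0$ small enough to land strictly inside the translated cone. Tracking the rounding and cone-displacement errors yields an inequality of the form
\[
(n-m+1)\dfrob_s(A)+(n-m)+2\,\frac{m(A)}{M(A)}\ge\mu_s(K,\Lambda),
\]
where the factor $n-m+1$ arises from a Carath\'eodory-type decomposition of the witness point $\ve x^*$ over the vertices of $K$, and the additive terms reflect the size of the discretization and cone-displacement errors, bounded via Cramer's rule on the $m\times m$ minors of $A$ in terms of $n-m$ and $m(A)/M(A)$.

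Finally, I bound $\mu_s(K,\Lambda)$ from below. The $s$-covering volume inequality gives $\mu_s(K,\Lambda)^{n-m}\vol(K)\ge s\det\Lambda$; applying the same vertex bound via Cramer's rule shows that every vertex of $K$ has Euclidean norm controlled by $M(A)/m(A)$ and $\sqrt{n/(n-m)}$, so that $K$ is contained in a ball in $U$ of that radius and $\vol(K)\le\kappa_{n-m}R^{n-m}$. Substituting this together with $\det\Lambda=\sqrt{\det(AA^\intercal)}$ and the Stirling-type estimate $\kappa_{n-m}^{-1/(n-m)}\gtrsim\sqrt{(n-m)/n}$ recovers the main term $s^{1/(n-m)}\det(AA^\intercal)^{1/(n-m)}(m(A)/M(A))^2\sqrt{(n-m)/n}$ of the statement. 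The main obstacle is the second step: converting the continuous witness $\ve x^*$ into an admissible integer $\ve b^*$ in $\interior(t\ve v+C)$ with discretization loss matched exactly to $(n-m)+2m(A)/M(A)$; the other steps are routine once $K$ and the vertex bound are identified.
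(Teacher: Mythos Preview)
Your plan has the right high-level shape --- reduce $\dfrob_s(A)$ to a lower bound on $\mu_s$ and then estimate $\mu_s$ via the volume inequality --- but two of your three steps do not produce the constants you claim, and the discrepancies are not minor bookkeeping.

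\textbf{The reduction to $\mu_s$.} The inequality you assert,
\[
(n-m+1)\,\dfrob_s(A)+(n-m)+2\,\frac{m(A)}{M(A)}\ \ge\ \mu_s(K,\Lambda),
\]
is not what a rounding/Carath\'eodory argument yields. The paper does \emph{not} work with $K=P(A,{\ve v})-{\ve 1}$ directly. It replaces ${\ve v}=A{\ve 1}$ by $\overline{\ve v}=A_1\overline{\ve 1}$ (the sum of the first $m$ columns) and proves, via a Kannan-type projection argument, that
\[
\dfrob_s(A,\overline{\ve v})\ \ge\ \mu_s\bigl(P(A,\overline{\ve v})-\widehat{\ve 1},\,L_A^\perp\bigr)-2-c(A_1,A),
\qquad c(A_1,A)\le (n-m)\frac{M(A)}{m(A)}.
\]
Then $\dfrob_s(A)\ge\rho_{\min}\,\dfrob_s(A,\overline{\ve v})$ with $\rho_{\min}\ge\frac{1}{n-m+1}\frac{m(A)}{M(A)}$, where $\rho_{\min}$ is obtained by writing $\overline{\ve v}$ as a convex combination of $n-m+1$ vertices of $P(A,\overline{\ve v})$ and bounding their coordinates via Cramer's rule. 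This is where the factor $\frac{1}{n-m+1}$ \emph{and} one factor of $\frac{m(A)}{M(A)}$ simultaneously enter; you cannot separate them the way your displayed inequality does. Your ``Carath\'eodory decomposition of the witness $\ve x^*$'' does not obviously give a clean $n-m+1$ without the accompanying minor ratio.

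\textbf{The volume bound.} Your ball-containment argument is wrong on the constants. Stirling gives $\kappa_{d}^{-1/d}\sim\sqrt{d/(2\pi e)}$, which depends only on $d=n-m$ and is \emph{not} $\sqrt{(n-m)/n}$. The factor $\sqrt{(n-m)/n}$ in the theorem is not a ball-volume artefact at all: it comes from Keith Ball's theorem that the maximal $(n-m)$-dimensional section of an $n$-cube of edge $\sigma$ has volume $\bigl(\sigma\sqrt{n/(n-m)}\bigr)^{n-m}$. The paper shows (again by Cramer's rule on $\overline{\ve v}$) that the vertices of $P(A,\overline{\ve v})$ lie in a cube of edge $M(A)/m(A)$, and then applies Ball's theorem. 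This cube argument contributes the \emph{second} factor of $m(A)/M(A)$ in the main term. Your ball bound produces only one such power, so even with a correct radius you would end up one factor of $m(A)/M(A)$ short.

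In summary: the two powers of $m(A)/M(A)$ in the main term come from two different places --- one from comparing $\dfrob_s(A)$ with $\dfrob_s(A,\overline{\ve v})$, and one from the cube-section volume bound --- and the $\sqrt{(n-m)/n}$ comes from Ball's cube-section theorem, not from the volume of a Euclidean ball. Your outline conflates these sources and the Stirling step is simply incorrect.
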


Pleasants et al. \cite{PRS} extensively studied the special case $s=1$ and $m=n-1$ and proved, in particular, that the set ${\mathcal F}_1(A)$ has the following remarkable property.
It is shown in Theorem 6.1 of \cite{PRS} that there exists a unique maximal (w.r.t. inclusion) translated cone ${\ve h}+C$  such that all integral points
in its interior belong to ${\mathcal F}_1(A)$, but there are (infinitely many) integral points on its boundary which are not in ${\mathcal F}_1(A)$.
In general, i.e., for $1<m<n-1$, such a cone is not uniquely determined, see, for instance, \cite{Amosetal} and Section  \ref{m=n-1}.

Theorem 6.1 of \cite{PRS} also describes the location of the unique maximal cone ${\ve h}+C$. We show that this result implies an explicit formula for the diagonal Frobenius number $\dfrob_1(A)$.
Let ${\ve z}=(z_1, z_2, \ldots, z_n)$  be a  primitive integral generator of the kernel of $A$.
\begin{theo}
For $m=n-1$ and  $s=1$ we have
\begin{equation*}
  \dfrob_1(A)= \frac{z^+ z^-}{z^+ +z^-}-1,
\end{equation*}
where $z^+=\max\{z_i: z_i> 0\}$ and $z^-=\max\{|z_i|: z_i< 0\}$.
\label{formula}
\end{theo}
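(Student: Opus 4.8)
The plan is to reduce the statement to an extremal problem about integral lines in direction ${\ve z}$. By assumption \eqref{assumption} i) the map $A\colon\Z^n\to\Z^m$ is surjective, and since ${\ve z}$ is primitive the integral solutions of $A{\ve x}={\ve b}$ form a single coset ${\ve x}_0+\Z{\ve z}$ of the line $\ell({\ve b})=\{{\ve x}\in\R^n:A{\ve x}={\ve b}\}$; moreover assumption \eqref{assumption} ii) forces ${\ve z}$ to have a positive and a negative entry, so $z^+,z^->0$. As ${\ve b}$ runs over $\Z^m$, the lines $\ell({\ve b})$ run over all ${\ve x}_0+\R{\ve z}$ with ${\ve x}_0\in\Z^n$; we have ${\ve b}\in{\mathcal F}_1(A)$ iff $\ell({\ve b})$ contains a point of $\R^n_{\geq 0}\cap\Z^n$ (call $\ell$ \emph{deficient} otherwise), while $t{\ve v}+C=\{A{\ve x}:{\ve x}\geq t{\ve 1}\}$, so on passing to interiors ${\ve b}\in\interior(t{\ve v}+C)$ iff $\ell({\ve b})$ meets $\{{\ve x}:{\ve x}>t{\ve 1}\}$, i.e. iff $\phi(\ell({\ve b}))>t$, where
\[
\phi({\ve x}_0+\R{\ve z})\ :=\ \max_{\lambda\in\R}\ \min_{1\leq i\leq n}\bigl((x_0)_i+\lambda z_i\bigr)
\]
(the maximum is attained because $z^+,z^->0$ make this concave function tend to $-\infty$ in both directions). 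Hence $\dfrob_1(A)=\sup\{\phi(\ell):\ell\ \text{deficient}\}$, with the proviso that a negative supremum means $\dfrob_1(A)=0$ — precisely the case where ${\mathcal F}_1(A)$ already contains $C\cap\Z^m$. It remains to show this supremum equals $\frac{z^+z^-}{z^++z^-}-1$.

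For the upper bound, fix a deficient line $\ell={\ve x}_0+\R{\ve z}$ and let $\lambda^\ast$ attain $\phi(\ell)=\mu$, so ${\ve x}_0+\lambda^\ast{\ve z}\geq\mu{\ve 1}$. If $\lambda^\ast\in\Z$ then ${\ve x}_0+\lambda^\ast{\ve z}\in\Z^n$ lies on $\ell$, so by deficiency it has a coordinate $\leq-1$, giving $\mu\leq-1\leq\frac{z^+z^-}{z^++z^-}-1$. Otherwise put $\theta=\lambda^\ast-\lfloor\lambda^\ast\rfloor\in(0,1)$ and consider the integral points ${\ve u}={\ve x}_0+\lfloor\lambda^\ast\rfloor{\ve z}$ and ${\ve w}={\ve x}_0+\lceil\lambda^\ast\rceil{\ve z}$ of $\ell$. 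From ${\ve x}_0+\lambda^\ast{\ve z}\geq\mu{\ve 1}$ one gets $u_i\geq\mu-\theta z_i$ and $w_i\geq\mu+(1-\theta)z_i$ for all $i$. Deficiency yields a coordinate $i_{\ve u}$ with $u_{i_{\ve u}}\leq-1$: if $z_{i_{\ve u}}\leq0$ this already forces $\mu\leq-1$, otherwise $-1\geq u_{i_{\ve u}}\geq\mu-\theta z_{i_{\ve u}}\geq\mu-\theta z^+$; either way $\mu\leq\theta z^+-1$. Symmetrically a coordinate of ${\ve w}$ that is $\leq-1$ gives $\mu\leq(1-\theta)z^--1$. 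Hence $\mu\leq\min(\theta z^+,(1-\theta)z^-)-1$, and maximising $\min(\theta z^+,(1-\theta)z^-)$ over $\theta\in[0,1]$ (balance at $\theta=z^-/(z^++z^-)$) gives $\mu\leq\frac{z^+z^-}{z^++z^-}-1$.

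For sharpness, pick indices $p,q$ with $z_p=z^+$, $z_q=-z^-$, fix a large integer $N$, and take ${\ve x}_0\in\Z^n$ with $(x_0)_p=-1$, $(x_0)_q=z^--1$, and $(x_0)_i=N$ for $i\notin\{p,q\}$. For every $j\in\Z$ the point ${\ve x}_0+j{\ve z}$ fails to be nonnegative: coordinate $p$ equals $-1+jz^+<0$ when $j\leq0$, and coordinate $q$ equals $z^--1-jz^-<0$ when $j\geq1$; so $\ell={\ve x}_0+\R{\ve z}$ is deficient. On the other hand $\min_i((x_0)_i+\lambda z_i)\leq\min((x_0)_p+\lambda z^+,(x_0)_q-\lambda z^-)$ always, with equality at $\lambda_0=z^-/(z^++z^-)$ once $N$ is large (there the remaining coordinates are $\geq N-\max(z^+,z^-)$, which exceeds the common value); thus $\phi(\ell)$ equals $(x_0)_p+\lambda_0 z^+=-1+\frac{z^+z^-}{z^++z^-}$. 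So the supremum is attained and equals $\frac{z^+z^-}{z^++z^-}-1$, which proves the theorem; this is of course consistent with the description of the maximal cone ${\ve h}+C$ in Theorem 6.1 of \cite{PRS}.

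The hard part is getting the exact additive constant $-1$ in the upper bound: that is precisely where one must use the \emph{integrality} of the two neighbouring lattice points ${\ve u},{\ve w}$ of $\ell$ at the optimal $\lambda^\ast$ — their offending coordinates are $\leq-1$, not merely $<0$ — rather than only the real geometry of the line, and one has to carry along the sign cases and the possibility $\mu<0$ (the reason for the nonnegativity proviso in the reduction). Everything else is bookkeeping: the reduction in the first paragraph just matches up the two descriptions of ${\mathcal F}_1(A)$ and of $\interior(t{\ve v}+C)$, and the $\theta$‑maximisation is elementary.
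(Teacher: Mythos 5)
Your proof is correct and takes a genuinely different route from the paper. The paper's proof is short because it invokes Theorem~6.1 of \cite{PRS} as a black box: that result supplies the unique maximal cone ${\ve u}-{\ve v}+C$ whose interior lattice points are exactly feasible, after which $\dfrob_1(A)$ is read off from the one-variable optimisation $\gamma+1=\min_\rho\max\{(1-\rho)z^+,\,\rho z^-\}$. You instead prove the same one-variable extremal identity from scratch: you encode deficiency of an integral line ${\ve x}_0+\R{\ve z}$ directly, exploit that the two integer neighbours ${\ve u},{\ve w}$ of the optimal real parameter $\lambda^\ast$ each have a coordinate $\leq -1$ to get $\mu\leq\min(\theta z^+,(1-\theta)z^-)-1$, and then construct an explicit deficient line attaining the balance value. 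In effect you reprove the relevant piece of \cite[Theorem~6.1]{PRS} in a self-contained way; what this buys is independence from \cite{PRS}, at the cost of a longer (but elementary) case analysis, and the slick cone-containment picture ${\ve u}-{\ve v}+C$ is replaced by the functional $\phi$.

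One caveat, which you notice but do not fully resolve, and which the paper shares: the right-hand side $\frac{z^+z^-}{z^++z^-}-1$ can be negative (exactly when $z^+=1$ or $z^-=1$, e.g.\ $A=(1,1)$, ${\ve z}=(1,-1)$ gives $-\tfrac12$), whereas $\dfrob_1(A)\geq0$ by definition. What your argument (and the paper's) actually establishes is $\dfrob_1(A)=\max\bigl(0,\frac{z^+z^-}{z^++z^-}-1\bigr)$; the identity as stated holds precisely when $z^+,z^-\geq2$. This is a shared edge-case gap, not a defect peculiar to your approach.
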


The structure of the paper is as follows. Section \ref{sec:bounds} contains results from the geometry of numbers and new estimates for $\dfrob_s(A)$ in terms of the $s$-covering radius needed in the proofs. In Section \ref{Optimal_lower_bound}, we prove Theorem \ref{thm:main}. The proofs of Theorems  \ref{upper_bound} and \ref{thm:lower_bound} are given in Section \ref{2theorems}.   Finally,
the proof of Theorem \ref{formula} and a new example showing not uniqueness of the maximal cone for $1<m<n-1$  are presented in Section \ref{m=n-1}.

\section{Geometry of numbers and bounds on $\dfrob_s(A)$ in terms of the $s$-covering radius}
\label{sec:bounds}

First we will state results from Minkowski's geometry of
numbers (see \cite{peterbible, GrLek}) needed in this paper.

For an $d$-dimensional lattice $L\subset \R^n$ and an origin-symmetric convex body $K\subset \spn_\R(L)$ the {\em successive minima}  $\lambda_i=\lambda_i(K,L)$
are defined by
\bea \lambda_i(K,L) = \min \left\{ \lambda > 0 : \dim \left(
    \lambda K \cap L \right) \geq i \right\}, 1\leq i\leq d.
\eea

Minkowski's celebrated theorem on successive
minima states (cf.~\cite[Theorem 23.1]{peterbible})
\begin{equation}
 \frac{2^d}{d!}\det(L)\leq \lambda_1\,\lambda_2\cdot\ldots\cdot\lambda_d\,\vol(K)\leq
 2^d\det(L)\,,
\label{eq:second_minkowski}
\end{equation}
where $\vol(K)$ is the $d$-dimensional volume of $K$.

Both inequalities in (\ref{eq:second_minkowski}) can be slightly improved in the special case
of a ball, but since we are mainly not interested in constants
depending on the dimension we do not state these improvements.

There is  a classical relation between the successive minima and
the covering radius due to Jarnik, which we will apply in the
following form
\begin{equation}
\frac{1}{2} \lambda_d(K,L)\leq \mu(K,L)\leq \frac{d}{2}\lambda_d(K,L).
\label{eq:jarnik}
\end{equation}

Next we remark, that the $s$-covering radius
$\mu_s(K,L)$ (see \eqref{eq:def_covering})  may also be described
equivalently as  the smallest positive number $\mu$ such that any translate of $K$
contains at least $s$ lattice points, i.e.,
\begin{equation}\begin{split}
 \mu_s(K,L)=\min\{\mu>0 : \#\{ ({\ve x}+\mu K)\cap L\}\geq s\text{
   for all }\;{\ve x}\in\spn_{\R}(L)\}.
\label{eq:s-covering}
\end{split}
\end{equation}

In \cite{AFH} the  $s$-covering radius was bounded by
\begin{equation}
       s^\frac{1}{d}\left(\frac{\det(L)}{\vol K}\right)^\frac{1}{d}\leq
       \mu_s(K,L) \leq  \mu_1(K,L) + (s-1)^\frac{1}{d}\left(\frac{\det(L)}{\vol(K)}\right)^\frac{1}{d}.
\label{lem:s-covering}
\end{equation}

For $m=1$ there is a nice identity, due to Kannan \cite{Kannan}, between the
Frobenius number $\dfrob_1({\ve a})$ and the covering radius of a simplex. Please see Section \ref{Optimal_lower_bound} for more details.
When $m>1$ no exact formula for $\dfrob_s(A)$ in terms of the $s$-covering radius is known. Here we will prove upper and lower bounds needed for the proofs.

Let $L_A\subset\Z^n$ be the $m$-dimensional lattice generated by the
rows of the given matrix $A\in\Z^{m\times n}$ satisfying the
assumptions \eqref{assumption} and let
\begin{equation*}
 L_A^\perp=\{{\ve z}\in\Z^n : A\,{\ve z}={\ve 0}\}\,.
\end{equation*}
 Then, in particular, we have %(cf.~\cite[Proposition
% 1.2.9]{Martinet}\marginpar{\tiny does it really give this formula?})
\begin{equation}
        \det L^\perp_A=\det L_A = \sqrt{\det A\,A^\intercal}. %=\sqrt{\sum_{m\times m \text{ minors } A_{I_m}} (\det A_{I_m})^2}.
\label{eq:det}
\end{equation}
Our first lemma gives an upper bound on $\dfrob_s(A)$.

\begin{lemma} Let $1\leq m<n$. Then
\begin{equation*}
          \dfrob_s(A)\leq \mu_s(P(A,{\ve v})-{\ve 1}, L^\perp_A).
\end{equation*}
%where ${\ve 1}\in\R^n$ denotes the all-$1$-vector, i.e., ${\ve 1}=(1,1,\ldots,1)^\intercal\in\R^n$.
\label{lem:inhom_min}
\end{lemma}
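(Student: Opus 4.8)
The plan is to unpack the definition of $\dfrob_s(A)$ and show that the claimed upper bound follows from the characterization \eqref{eq:s-covering} of the $s$-covering radius applied to the body $K = P(A,{\ve v})-{\ve 1}$ and the lattice $L^\perp_A$. First I would fix $t = \mu_s(P(A,{\ve v})-{\ve 1}, L^\perp_A)$ and let ${\ve b}\in\interior(t{\ve v}+C)\cap\Z^m$ be arbitrary; the goal is to produce at least $s$ distinct integer points in $P(A,{\ve b})$. Since ${\ve b}$ lies in the interior of $t{\ve v}+C$ and $C$ is the cone generated by the columns of $A$, one can write ${\ve b}-t{\ve v}$ as a nonnegative combination of the columns with, in fact, strictly positive slack available; the key point is that there exists a real point ${\ve x}_0\in\R^n_{>0}$ with $A{\ve x}_0={\ve b}$, i.e. $P(A,{\ve b})$ has nonempty interior relative to $\R^n_{\ge 0}$, and moreover ${\ve x}_0$ can be chosen so that ${\ve x}_0 - t\,{\ve 1}$ still has all coordinates bounded away from zero in the right way. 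More precisely, the scaled simplex-like body $t(P(A,{\ve v})-{\ve 1})$ sits inside the fiber $\{{\ve x}: A{\ve x} = t{\ve v}\}$, and translating it by the appropriate point of the fiber $\{{\ve x}: A{\ve x} = {\ve b}\}$ should land it inside $P(A,{\ve b})$ (this is exactly why the interior hypothesis on ${\ve b}$ is needed).

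The second step is the counting. The fiber $\{{\ve x}\in\R^n: A{\ve x}={\ve b}\}$ is a translate of $\spn_\R(L^\perp_A)$, and integer points in this fiber correspond to translates of $L^\perp_A$ within that affine subspace, since $L^\perp_A$ is precisely the lattice of integer vectors in the kernel of $A$. So I would: (a) pick a particular integer solution ${\ve x}_1$ of $A{\ve x}={\ve b}$ (which exists by assumption \eqref{assumption} i), which guarantees that $A$ maps $\Z^n$ onto $\Z^m$); (b) identify the real translate ${\ve x}_0 + t(P(A,{\ve v})-{\ve 1})$ of the dilated body inside the fiber; (c) apply \eqref{eq:s-covering} in the space $\spn_\R(L^\perp_A)$, which says that any translate of $t(P(A,{\ve v})-{\ve 1})$ by a point of $\spn_\R(L^\perp_A)$ contains at least $s$ points of $L^\perp_A$, hence any translate of that body sitting in the fiber over ${\ve b}$ contains at least $s$ integer points of the fiber; (d) conclude that these $s$ integer points lie in $P(A,{\ve b})$ because the body sits inside $P(A,{\ve b})$ by step one. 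This yields $\#(P(A,{\ve b})\cap\Z^n)\ge s$, i.e. ${\ve b}\in{\mathcal F}_s(A)$, and since ${\ve b}$ was arbitrary in $\interior(t{\ve v}+C)\cap\Z^m$ we get $\dfrob_s(A)\le t$, as required.

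I would pin down the containment in step one as follows: $P(A,{\ve v})$ is the slice $\{{\ve x}\ge {\ve 0}: A{\ve x}={\ve v}\}$, which contains ${\ve 1}$ in its relative interior (since $A{\ve 1}={\ve v}$ and ${\ve 1}>{\ve 0}$), so $P(A,{\ve v})-{\ve 1}$ is a convex body in $\spn_\R(L^\perp_A)$ containing the origin in its relative interior. For a point ${\ve y}$ in the fiber over ${\ve b}$, the set ${\ve y} + t\big(P(A,{\ve v})-{\ve 1}\big)$ equals $\{{\ve y} + t({\ve w}-{\ve 1}): {\ve w}\in P(A,{\ve v})\}$, and each such element satisfies $A(\,\cdot\,) = {\ve b} + t({\ve v}-{\ve v}) = {\ve b}$; the nonnegativity of the coordinates is where I must choose ${\ve y}$ carefully, namely so that ${\ve y} - t\,{\ve 1} \ge {\ve 0}$ coordinatewise, which is possible precisely when ${\ve b} - t{\ve v}\in C$ — and the strict interior assumption gives a little extra room.

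The main obstacle I anticipate is this last geometric bookkeeping: verifying that ${\ve b}\in\interior(t{\ve v}+C)$ really does furnish a point ${\ve y}$ of the fiber with ${\ve y}\ge t\,{\ve 1}$, so that the whole translated body stays in the positive orthant. This should follow from Carathéodory-type reasoning on the cone $C$ together with the openness of the interior, but it needs to be stated cleanly; everything else is a direct application of \eqref{eq:s-covering} and \eqref{eq:det}.
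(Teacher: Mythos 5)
Your argument is correct and follows essentially the same route as the paper's proof: translate the fiber $\{A{\ve x}={\ve b}\}$ back to $\spn_\R(L^\perp_A)$ via an integer solution ${\ve z}$ (whose existence is guaranteed by assumption \eqref{assumption} i)), exhibit a translate of $t\bigl(P(A,{\ve v})-{\ve 1}\bigr)$ inside $P(A,{\ve b})-{\ve z}$, and invoke the characterization \eqref{eq:s-covering} at the scaled radius $\mu_s\bigl(t(P(A,{\ve v})-{\ve 1}),L^\perp_A\bigr)=1$. The paper packages the same three facts — translation invariance, monotonicity under inclusion, and homogeneity of $\mu_s$ — as a one-line chain of inequalities $\mu_s(P(A,{\ve b})-{\ve z},L^\perp_A)\le\tfrac1t\mu_s(P(A,{\ve v})-{\ve 1},L^\perp_A)\le 1$, whereas you make the containment explicit pointwise; the content is identical. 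One small remark: the "extra room" you worry about from the strict interior of $t{\ve v}+C$ is not actually needed. All that the geometric step requires is ${\ve b}-t{\ve v}\in C$, i.e.\ the existence of $\ve\alpha\ge{\ve 0}$ with $A\ve\alpha={\ve b}-t{\ve v}$; this already holds for every ${\ve b}\in(t{\ve v}+C)\cap\Z^m$. The paper accordingly proves the (formally stronger) statement for the closed translated cone, which of course implies the bound on $\dfrob_s(A)$ since the definition only quantifies over the interior.
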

\begin{proof} Let $t\geq \mu_s(P(A,{\ve v})-{\ve 1}, L^\perp_A)$, and
  let ${\ve b}\in (t\,{\ve v}+C)\cap\Z^m$, i.e., there exists a
  non-negative  vector ${\ve \alpha}\in\R^n_{\geq 0}$ such that ${\ve
    b}=A\,(t\,{\ve 1}+{\ve \alpha})$. By
  \eqref{assumption} i) we know that the columns of $A$ form a
  generating system of the lattice $\Z^m$ (cf.~\cite[Corollary 4.1c]{ASch}). Thus there exists a ${\ve
    z}\in\Z^n$ such that
\begin{equation*}
 {\ve b}=A\,(t\,{\ve 1}+{\ve \alpha})=A\,{\ve z}.
\end{equation*}
 So we have  $P(A,{\ve b})-{\ve z}\subset \spn_\R(L_A^\perp)$ and
it suffices to prove that $P(A,{\ve b})-{\ve z}$ contains at least $s$ integral
points of $L_A^\perp$, for which it is enough to verify
\begin{equation*}
   \mu_s(P(A,{\ve b})-{\ve z}, L_A^\perp)\leq 1.
\end{equation*}
Since the $s$-covering radius is invariant with respect to
translations  and since $P(A,t{\ve v})+{\ve \alpha}\subseteq P(A, {\ve
  b})$ we get
\begin{equation*}
\begin{split}
 \mu_s(P(A,{\ve b})-{\ve z}, L_A^\perp) & = \mu_s(P(A,{\ve b})-(t\,{\ve
   1}+{\ve \alpha}), L_A^\perp) \\
&\leq \mu_s(P(A,t{\ve v})-t\,{\ve
   1}, L_A^\perp)=\mu_s(t\,(P(A,{\ve v})-{\ve
   1}), L_A^\perp)\\
&\leq \frac{1}{t}\mu_s(P(A,{\ve v})-{\ve
   1}, L_A^\perp)\leq 1.
\end{split}
\end{equation*}
\end{proof}

Next, we will obtain a lower bound on $\dfrob_s(A)$ in terms of the $s$-covering radius.
The proof is based on Kannan's approach \cite{Kannan}, but in order to apply it
we have to adjust the setting.
First we extend  the definition of $\dfrob_s(A)$ to all ${\ve w}\in
\mathrm{int}\,C\cap\Z^m$ by defining 
\begin{equation*}
\dfrob_s(A,{\ve w})=\min\{t\geq 0 : \mathrm{int}(t{\ve w}+C)\cap \Z^n \subseteq  {\mathcal F}_s(A)\}.
\end{equation*}
Hence, $\dfrob_s(A)=\dfrob_s(A,{\ve v})$ and it is easy to see that
$\dfrob_s(A,{\ve w})$ is well defined. For instance, since ${\ve w}$
belongs to the interior of the cone, there exists a positive vector
${\ve \rho}\in\R^n_{>0}$ such that ${\ve w}=A{\ve \rho}$.
Denoting by $\rho_{\max}$ and  $\rho_{\min}$ the  maximum and the
minimum of the entries
of ${\ve \rho}$, respectively,  we have
\begin{equation*}
    {\ve w} +C \subseteq  \rho_{\min}\,{\ve v} +C  \text{ and }  {\ve v} +C \subseteq  \frac{1}{\rho_{\max}}\,{\ve w}+C,
\end{equation*}
and thus
\begin{equation}
 \frac{1}{\rho_{\max}} \dfrob_s(A)\leq \dfrob_s(A,{\ve w})\leq
 \frac{1}{\rho_{\min}} \dfrob_s(A).
\label{eq:rel_bound}
\end{equation}

In the following,
let $A=(A_1,A_2)$ with $A_1\in\Z^{m\times m}$ with $\det A_1\ne
0$. Furthermore, let
\begin{equation*}
\mathrm{c}(A_1,A)=\max_{j=1,\ldots,m}\sum_{i=1}^{n-m}\max(\alpha_{i,j},0),
\end{equation*}
where $\alpha_{i,j}$ are the entries of the matrix
$A_1^{-1}\,A_2$. With this notation we have

\begin{lemma} Let $\overline{{\ve v}}= A_1\overline{{\ve 1}}$ be the sum of the
  first $m$ columns of $A$. Let $\widehat{{\ve 1}}=(\overline{{\ve 1}},{\ve
  0})^\intercal\in\Z^n$ be the vector consisting of $m$ ones and $n-m$
zeros.  Then
\begin{equation*}
\lceil \dfrob_s(A, \overline{{\ve v}})\rceil \geq \mu_s(P(A,\overline{{\ve v}})-\widehat{{\ve 1}}, L^\perp_A)-1-c(A_1,A).
\end{equation*}
\label{lem:firstpart}
\end{lemma}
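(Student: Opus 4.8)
The plan is to reverse the inequality chain used in Lemma~\ref{lem:inhom_min}: there we passed from a bound on the $s$-covering radius of $P(A,{\ve v})-{\ve 1}$ to an upper bound on the diagonal Frobenius number, and now we want to extract a lower bound on $\lceil\dfrob_s(A,\overline{{\ve v}})\rceil$ from below by the same covering radius (with the normalising vector $\widehat{{\ve 1}}$ in place of ${\ve 1}$), paying a loss of $1+c(A_1,A)$. The natural strategy, following Kannan, is to take $t=\lceil\dfrob_s(A,\overline{{\ve v}})\rceil$ and pick a ``worst'' point ${\ve x}\in\spn_\R(L_A^\perp)$ for which some translate $P(A,t\overline{{\ve v}})+{\ve x}$-type body just barely contains $s$ lattice points of $L_A^\perp$; then lift this to an integer right-hand side ${\ve b}\in\mathrm{int}(t\overline{{\ve v}}+C)\cap\Z^m$ whose knapsack polytope $P(A,{\ve b})$ contains exactly these $s$ integer points, forcing $\mu_s$ of the scaled simplex to be at most roughly $t$.

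First I would use the block structure $A=(A_1,A_2)$ to parametrise integer points: writing ${\ve x}=(x_1,x_2)^\intercal$, the equation $A{\ve x}={\ve b}$ is equivalent to $x_1=A_1^{-1}{\ve b}-(A_1^{-1}A_2)x_2$, so that the free coordinates $x_2\in\R^{n-m}$ range over a translate of $P(A,{\ve v})-\widehat{{\ve 1}}$ projected onto the last $n-m$ coordinates, and $\spn_\R(L_A^\perp)$ is exactly this $x_2$-space. Next I would fix a point ${\ve y}$ in the relative interior of the cone $C$ that is a lattice point of $\Z^m$ and realises (up to the ceiling rounding) the definition of $\dfrob_s(A,\overline{{\ve v}})$: by definition of that quantity there are right-hand sides ${\ve b}$ with $\mathrm{int}(t'\overline{{\ve v}}+C)\cap\Z^m\not\subseteq\F_s(A)$ for every $t'<\dfrob_s$, i.e.\ a ${\ve b}$ with fewer than $s$ integer points in $P(A,{\ve b})$ and ${\ve b}\in\mathrm{int}(t'\overline{{\ve v}}+C)$. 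Then I would write ${\ve b}=A(t'\overline{{\ve 1}}+{\ve\gamma})$ for some ${\ve\gamma}\ge{\ve 0}$ and translate to $\spn_\R(L_A^\perp)$; the point is that $P(A,{\ve b})-(\text{integer lift})$ contains fewer than $s$ lattice points of $L_A^\perp$, and containment $P(A,t'\overline{{\ve v}})\subseteq$ a translate of the body $P(A,\overline{{\ve v}})-\widehat{{\ve 1}}$ scaled by roughly $t'+1+c(A_1,A)$ — this is where the constant $c(A_1,A)$ enters, since rounding $A_1^{-1}{\ve b}$ to an integer vector shifts the first $m$ coordinates by at most the row-sums of the positive part of $A_1^{-1}A_2$, namely $c(A_1,A)$. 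Hence $\mu_s\bigl((t'+1+c(A_1,A))(P(A,\overline{{\ve v}})-\widehat{{\ve 1}}),L_A^\perp\bigr)>1$, i.e.\ $(t'+1+c(A_1,A))^{-1}\mu_s(P(A,\overline{{\ve v}})-\widehat{{\ve 1}},L_A^\perp)>1$; letting $t'\uparrow\dfrob_s(A,\overline{{\ve v}})$ and rounding up gives the claimed inequality.

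The main obstacle I anticipate is the translation-and-rounding step: one has to be careful that when $P(A,{\ve b})-{\ve z}$ is viewed inside $\spn_\R(L_A^\perp)$, the discrepancy between ${\ve b}/\!\!\sim$ and an honest integer lattice translate of the model simplex $P(A,\overline{{\ve v}})-\widehat{{\ve 1}}$ is controlled \emph{uniformly} by $c(A_1,A)$, independent of ${\ve b}$, and that the scaling factor is genuinely $t'+1+c(A_1,A)$ and not something larger. Concretely, given ${\ve b}\in\mathrm{int}(t'\overline{{\ve v}}+C)$, the vector ${\ve b}-\lceil t'\rceil\overline{{\ve v}}$ is a nonnegative combination of columns of $A$, and expressing it in terms of the last $n-m$ columns after eliminating the first $m$ introduces exactly the coefficients $\alpha_{i,j}$; bounding the resulting shift in the $x_1$-coordinates by $\max_j\sum_i\max(\alpha_{i,j},0)=c(A_1,A)$ is the crux. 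Everything else — invariance of $\mu_s$ under translation, monotonicity $\mu_s(\lambda K)=\lambda^{-1}\mu_s(K)$, and the passage to the limit $t'\to\dfrob_s$ — is routine and parallels the proof of Lemma~\ref{lem:inhom_min}.
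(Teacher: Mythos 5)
Your proposal has the inequality going in the wrong direction, which is a fatal flaw. The lemma asserts
$\lceil\dfrob_s(A,\overline{\ve v})\rceil+1+c(A_1,A)\geq\mu_s(P(A,\overline{\ve v})-\widehat{\ve 1},L_A^\perp)$,
i.e.\ an \emph{upper} bound on the $s$-covering radius; to establish such a bound one must show that \emph{every} point of $\spn_\R(L_A^\perp)$ is covered by $s$ distinct lattice translates of the body scaled by $\lceil\dfrob_s\rceil+1+c(A_1,A)$. Your argument instead selects a ``bad'' right-hand side $\ve b\in\interior(t'\overline{\ve v}+C)\cap\Z^m$ with fewer than $s$ integer points in $P(A,\ve b)$, and from the non-covering of the corresponding translated, scaled simplex you conclude $\mu_s(P(A,\overline{\ve v})-\widehat{\ve 1},L_A^\perp)>t'+1+c(A_1,A)$. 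Exhibiting a single uncovered point only yields a \emph{lower} bound on $\mu_s$; letting $t'\uparrow\dfrob_s$ would give $\mu_s\geq\dfrob_s+1+c(A_1,A)$, equivalently $\dfrob_s\leq\mu_s-1-c(A_1,A)$, which is the reverse of what is wanted. Moreover, since the containment you invoke is $P(A,\ve b)\supseteq t'P(A,\overline{\ve v})+\text{shift}$, the natural scale is $t'$ and there is no way to upgrade it to $t'+1+c(A_1,A)$; and the assertion that $P(A,\ve b)$ ``contains exactly these $s$ integer points'' contradicts your own choice of $\ve b$ having fewer than $s$.

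The paper's proof runs in the opposite direction: take an arbitrary $\ve y\in\Z^{n-m}$, round $A_2\ve y$ to a lattice point $\ve l$ of the half-open zonotope $Q_A$ generated by the columns of $A$ so that $A_1^{-1}(A_2\ve y-\ve l)\in\Z^m$, and use that $t(\ve l)\overline{\ve v}+\ve l\in\mathcal F_s(A)$ (where $t(\ve l)\leq\lceil\dfrob_s(A,\overline{\ve v})\rceil$ by the reduction $\lceil\dfrob_s(A,\overline{\ve v})\rceil=\max_{\ve l\in Q_A\cap\Z^m}t(\ve l)$) to obtain $s$ distinct integer solutions $\ve x_1,\dots,\ve x_s$. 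The projections $\widetilde{\ve x}_i$ onto the last $n-m$ coordinates give $s$ distinct lattice points $\ve y-\widetilde{\ve x}_i$ of the projected lattice $L$, and the bound $A_1^{-1}A_2\widetilde{\ve x}_i\leq(t(\ve l)+1)\overline{\ve 1}$ (the $+1$ comes from $\ve l\in Q_A$) places them so that $\ve y$ is covered by $s$ translates of $(\lceil\dfrob_s\rceil+1)P$. The additional $c(A_1,A)$ absorbs the fractional part when passing from $\Z^{n-m}$ to $\R^{n-m}$. So the construction is a covering argument driven by the $s$ integer solutions of good instances, not a non-covering argument driven by bad ones. To repair your proposal you would need to discard the ``bad $\ve b$'' idea entirely and instead build the $s$ covering translates explicitly, which is essentially forced once the direction of the inequality is correctly identified.
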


\begin{proof} 
Let ${\ve a}_1,\dots,{\ve a}_n\in\Z^m$ be the columns of $A$, and let
\begin{equation*}
  Q_A=\{A\,{\ve\zeta}: 0< \zeta_i\leq  1\}
\end{equation*}
be the half-open zonotope generated by the columns of $A$. Since  every lattice
point ${\ve b}\in \interior C$, ${\ve b}=\sum_{i=1}^n\rho_i{\ve a}_i$, $\rho_i>0$, can be
written  as
\begin{equation*}
{\ve b}=\sum_{i=1}^n(\rho_i+1-\lceil\rho_i\rceil){\ve a}_i +
\sum_{i=1}^n(\lceil\rho_i\rceil-1){\ve a}_i,
\end{equation*}
 i.e., as a point in $Q_A$
plus a non-negative integral combination of the generators of $C$ we have
\begin{equation*}
\dfrob_s(A, \overline{{\ve v}})  = \min\left\{t\in\R_{\geq 0} :
(t\,\overline{{\ve v}}+Q_A)\cap\Z^m\subset {\mathcal F}_s(A)\right\}.
%\label{eq:reduction}
\end{equation*}
Now for ${\ve l}\in Q_A\cap\Z^m$ let
\begin{equation*}
  t({\ve l})= \min\left\{t\in\Z_{\geq 0} :
t\,\overline{{\ve v}}+{\ve l}\in {\mathcal F}_s(A)\right\}.
\end{equation*}
Then we have
\begin{equation}
\begin{split}
\lceil \dfrob_s(A, \overline{{\ve v}}) \rceil &= \min\left\{t\in\Z_{\geq 0} :
t\,\overline{{\ve v}}+(Q_A\cap\Z^m)\subset {\mathcal F}_s(A)\right\}\\ &=
\max \{t({\ve l}): {\ve l}\in Q_A\cap \Z^m\}.
\end{split}
\label{eq:reduction}
\end{equation}

In analogy
to  \cite{Kannan} we consider $L=L_A^\perp|\R^{n-m}$ and
$P=P(A,\overline{{\ve v}})|\R^{n-m}$, where $\cdot\,|\R^{n-m}$ denotes
the orthogonal projection which forgets the first $m$
coordinates. Hence
\begin{equation*}
L = \{{\ve z}\in\Z^{n-m} : A_1^{-1}A_2\, {\ve z} \in\Z^m\}\text{ and }
P  = \{{\ve x}\in\R^{n-m}_{\geq 0}: A_1^{-1}A_2\, {\ve x} \leq \overline{\ve 1}\}.
\end{equation*}
Since the projections $L_A^\perp$ to $L$ and $P(A,\overline{{\ve v}})$ to $P$ are
bijections  we have
\begin{equation*}
\mu_s(P(A,\overline{{\ve v}})-\widehat{{\ve 1}}, L^\perp_A)=\mu_s(P,L)
\end{equation*}
and we have to prove
\begin{equation}
\mu_s(P,L)\leq  \lceil \dfrob_s(A, \overline{{\ve v}})\rceil +1+c(A_1,A).
\label{eq:to_prove}
\end{equation}

%For ${\ve x}, {\ve y}\in \R^m$  we will write in the following ${\ve
 % x}\equiv {\ve y}\bmod A_1$ if $A_1^{-1}({\ve x}-{\ve y})\in\Z^m$.

\noindent{\bf Claim}. For ${\ve y}\in\Z^{n-m}$ there exist distinct ${\ve
  z}_1,\dots,{\ve z}_s\in L$ such that ${\ve y }\in {\ve z}_i+(\lceil
\dfrob_s(A, \overline{{\ve v}})\rceil+1)P$, $1\leq i\leq s$.

For the proof of the claim, let ${\ve l}\in Q_A\cap\Z^m$ such that $A_1^{-1} (A_2 {\ve
  y}-{\ve l})\in\Z^m$, which can be found as follows:  write $A_1{\ve \rho}=A_2 {\ve
  y}$ for some ${\ve \rho}\in\R^m$ and set ${\ve
  l}=\sum_{i=1}^m(\rho_i-\lfloor\rho_i\rfloor){\ve a}_i$.

By definition of $t({\ve l})$,
there are $s$ distinct ${\ve x}_i\in\Z^n_{\geq 0}$, $1\leq i\leq s$,
with  $A{\ve x}_i= t({\ve l}){\overline{\ve v}}+{\ve l}$.  We denote
by $\overline{{\ve x}}_i$ and $\widetilde{{\ve x}}_i$ the vectors
consisting of the first $m$ and the last $n-m$ coordinates of ${\ve
  x}_i$, respectively. Then
\begin{equation*}
\begin{split}
A_1^{-1}\,A_2({\ve y}-\widetilde{{\ve x}}_i) &=A_1^{-1}(A_2{\ve
  y}-{\ve l})-A_1^{-1}(A_2\widetilde{\ve x}_i-{\ve l})\\
&=A_1^{-1}(A_2{\ve y}-{\ve l})-t({\ve l})\overline{\ve 1}+\overline{{\ve x}}_i\in\Z^m.
\end{split}
\end{equation*}
Hence ${\ve y}-\widetilde{{\ve x}}_i\in L$, $1\leq i\leq s$, and
observe, that they are also distinct. Since
\begin{equation*}
A_1^{-1}A_2\widetilde{\ve x}_i =A_1^{-1}(t({\ve l})\overline{\ve
  v}+{\ve l}-A_1\overline{\ve x}_i)=
t({\ve l})\overline{\ve 1}+A_1^{-1}{\ve l}-\overline{\ve x}_i \leq (t({\ve
  l})+1)\overline{\ve 1}
\end{equation*}
by the choice of ${\ve l}$, we also have
 $\widetilde{\ve x}_i\in (t({\ve l})+1)P$ and with ${\ve z}_i={\ve
   y}-\widetilde{{\ve x}}_i$, $1\leq i\leq s$,  the claim is proven.

Finally,  we extend this covering property to all points in
$\R^{n-m}$. So let ${\ve y}\in\R^{n-m}$,  and we write ${\ve y}={\ve
  z}+{\ve x}$ with ${\ve z}\in\Z^{n-m}$ and ${\ve
  x}\in[0,1]^{n-m}$. According to the claim there  exist $s$ distinct ${\ve
  z}_1,\dots,{\ve z}_s\in L$ such that ${\ve z}\in {\ve z}_i+(\lceil
\dfrob_s(A, \overline{{\ve v}})\rceil+1)P$, $1\leq i\leq s$. By the
definition of $c(A_1,A)$ we have $A_1^{-1}A_2{\ve x}\leq c(A_1,A){\ve
  1}$ and so ${\ve x}\in c(A_1,A)\,P$. Hence
\begin{equation*}
   {\ve y}\in {\ve z}_i+  (\lceil \dfrob_s(A, \overline{{\ve
       v}})\rceil+1+c(A_1,A))P, \quad 1\leq i\leq s,
\end{equation*}
and \eqref{eq:to_prove} is shown.
\end{proof}

Now, based on this lemma, we can easily derive the desired lower bound for $\dfrob_s(A)$.

\begin{lemma} With the notation of Lemma \ref{lem:firstpart} we have
\begin{equation*}
\dfrob_s(A) \geq \frac1{(n-m+1)}\frac{m(A)}{M(A)}\left(\mu_s(P(A,\overline{\ve v})-{\ve 1},
  L^\perp_A)-2-c(A_1,A)\right).
\end{equation*}
\label{lem:lower_mu}
\end{lemma}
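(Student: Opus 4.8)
The plan is to combine Lemma \ref{lem:firstpart}, which already produces a lower bound for $\dfrob_s(A,\overline{{\ve v}})$ in terms of the $s$-covering radius, with the comparison inequality \eqref{eq:rel_bound}, which lets me pass from the weighted diagonal Frobenius number $\dfrob_s(A,\overline{{\ve v}})$ back to $\dfrob_s(A)=\dfrob_s(A,{\ve v})$. The only arithmetic input I still need is a bound on the two ``conversion constants'' that appear along the way: the factor $\rho_{\max}$ in \eqref{eq:rel_bound} (here $\overline{{\ve v}}=A_1\overline{{\ve 1}}=A\widehat{{\ve 1}}$, so the relevant ${\ve \rho}$ is $\widehat{{\ve 1}}$, but one must re-express $\overline{{\ve v}}$ as $A{\ve \rho}$ with ${\ve \rho}\in\R^n_{>0}$ since $\widehat{{\ve 1}}$ has zero entries) and the constant $c(A_1,A)$ from Lemma \ref{lem:firstpart}. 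Both will be controlled by $M(A)/m(A)$ and $n-m$ using Cramer's rule.

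First I would apply Lemma \ref{lem:firstpart}, together with the trivial estimate $\lceil \dfrob_s(A,\overline{{\ve v}})\rceil \le \dfrob_s(A,\overline{{\ve v}})+1$, to get
\begin{equation*}
\dfrob_s(A,\overline{{\ve v}}) \ge \mu_s(P(A,\overline{{\ve v}})-\widehat{{\ve 1}},L^\perp_A) - 2 - c(A_1,A).
\end{equation*}
Since $P(A,\overline{{\ve v}})$ has all of its first $m$ coordinates between $0$ and the corresponding entry of $A_1^{-1}\overline{{\ve v}}=\overline{{\ve 1}}$, one checks directly that translating by $\widehat{{\ve 1}}$ versus by ${\ve 1}$ changes the body only in a way absorbed into $c(A_1,A)$; here it is cleanest to note that $\mu_s$ is translation invariant and that $P(A,\overline{{\ve v}})-{\ve 1}$ and $P(A,\overline{{\ve v}})-\widehat{{\ve 1}}$ differ by the fixed translation vector whose last $n-m$ coordinates are $-1$, so in fact $\mu_s(P(A,\overline{{\ve v}})-{\ve 1},L^\perp_A)=\mu_s(P(A,\overline{{\ve v}})-\widehat{{\ve 1}},L^\perp_A)$. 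This identifies the covering radius in the lemma's statement with the one coming out of Lemma \ref{lem:firstpart}.

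Next I would bound the constants. Writing $A_1^{-1}A_2 = (\alpha_{i,j})$, Cramer's rule gives each $\alpha_{i,j}$ as a ratio of an $m$-minor of $A$ to $\det A_1$, so $|\alpha_{i,j}| \le M(A)/m(A)$; hence
\begin{equation*}
c(A_1,A) = \max_{j}\sum_{i=1}^{n-m}\max(\alpha_{i,j},0) \le (n-m)\frac{M(A)}{M(A)} \cdot \frac{M(A)}{m(A)}
\end{equation*}
— more carefully, $c(A_1,A)\le (n-m)\,M(A)/m(A)$. For the passage from $\overline{{\ve v}}$ to ${\ve v}$ via \eqref{eq:rel_bound}, I express $\overline{{\ve v}}=A\,{\ve \rho}$ with ${\ve\rho}\in\R^n_{>0}$; choosing $A_1$ to be the block of maximal $|\det|$ makes this possible with $\rho_{\max}\le (n-m+1)\,M(A)/m(A)$ after a short estimate (one perturbs $\widehat{{\ve 1}}$ into the interior of the cone at cost controlled by the same minor ratio). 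Then \eqref{eq:rel_bound} gives $\dfrob_s(A) \ge \rho_{\min}^{-1}\cdots$ — in the direction we need, $\dfrob_s(A,\overline{{\ve v}}) \le \rho_{\max}^{-1}\dfrob_s(A)$ is the wrong way, so I use instead $\dfrob_s(A) \ge \rho_{\max}\,\dfrob_s(A,\overline{{\ve v}})$, i.e. the first inequality of \eqref{eq:rel_bound} read with roles of ${\ve v},\overline{{\ve v}}$ swapped. Collecting: with $r = M(A)/m(A)\ge 1$ and the factor $(n-m+1)$ absorbing the cone-comparison constant,
\begin{equation*}
\dfrob_s(A) \ge \frac{1}{(n-m+1)\,r}\Bigl(\mu_s(P(A,\overline{{\ve v}})-{\ve 1},L^\perp_A) - 2 - c(A_1,A)\Bigr),
\end{equation*}
which is exactly the claimed inequality since $1/r = m(A)/M(A)$.

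The main obstacle is the bookkeeping in the cone-comparison step: $\overline{{\ve v}}$ lies in $\interior C$ but its natural preimage $\widehat{{\ve 1}}$ is only on the boundary of $\R^n_{\ge 0}$, so one must produce an interior preimage ${\ve \rho}$ with explicitly bounded $\rho_{\max}/\rho_{\min}$, and it is here that the choice of $A_1$ as a maximal-modulus $m$-block and the estimate $\rho_{\max}\le (n-m+1)M(A)/m(A)$ must be made precise. Everything else — translation invariance of $\mu_s$, the Cramer bound on $c(A_1,A)$, and assembling the final inequality — is routine.
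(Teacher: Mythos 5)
Your plan matches the paper's proof exactly in outline: (a) apply Lemma~\ref{lem:firstpart}; (b) drop the ceiling and use translation invariance of $\mu_s$ to switch from $\widehat{{\ve 1}}$ to ${\ve 1}$; (c) pass from $\dfrob_s(A,\overline{{\ve v}})$ to $\dfrob_s(A)$ via \eqref{eq:rel_bound}; (d) control the conversion constant by minor ratios. Steps (a) and (b) are fine.

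However, the execution of step (c) is genuinely garbled, and in a way that would matter if someone followed the argument literally. From \eqref{eq:rel_bound} with ${\ve w}=\overline{{\ve v}}$, the \emph{right-hand} inequality reads $\dfrob_s(A,\overline{{\ve v}})\le \rho_{\min}^{-1}\dfrob_s(A)$, i.e.\ $\dfrob_s(A)\ge \rho_{\min}\,\dfrob_s(A,\overline{{\ve v}})$. That is precisely the ``direction we need,'' contrary to what you wrote; and the statement you ``use instead,'' $\dfrob_s(A)\ge \rho_{\max}\,\dfrob_s(A,\overline{{\ve v}})$, is not a consequence of \eqref{eq:rel_bound} in either orientation and is generally false (since $\rho_{\max}\ge\rho_{\min}$). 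Consequently, what actually has to be bounded is $\rho_{\min}$ from \emph{below}, not $\rho_{\max}$ from above; you need $\rho_{\min}\ge \frac{1}{n-m+1}\,\frac{m(A)}{M(A)}$, which is not the inequality you stated. Your final displayed bound is correct, but the route to it, as written, doesn't support it.

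The other gap is step (d): you do not really justify the lower bound on $\rho_{\min}$. The sketch ``perturb $\widehat{{\ve 1}}$ into the interior at cost controlled by the minor ratio'' is not an argument. The paper's proof instead observes that $P(A,\overline{\ve v})=\{{\ve x}\in\R^n_{\ge 0}:A{\ve x}=\overline{\ve v}\}$ is an $(n-m)$-dimensional polytope whose vertices, by Cramer's rule, have all nonzero entries equal to ratios of two $m\times m$ minors of $A$, hence at least $m(A)/M(A)$; averaging a suitable collection of $n-m+1$ affinely independent vertices then produces a strictly positive ${\ve \rho}$ with $A{\ve \rho}=\overline{\ve v}$ and $\rho_{\min}\ge \frac{1}{n-m+1}\,\frac{m(A)}{M(A)}$. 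This is the step you need to make precise. (Incidentally, the choice of $A_1$ as the block of maximal determinant plays no role here and can be dropped, and the Cramer bound on $c(A_1,A)$ you include is not needed for this lemma — it is used only later in the proof of Theorem~\ref{thm:lower_bound}.)
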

\begin{proof} On account of Lemma \ref{lem:firstpart} we know that
\begin{equation*}
 \dfrob_s(A, \overline{{\ve v}}) \geq \mu_s(P(A,\overline{{\ve v}})-{\ve 1}, L^\perp_A)-2-c(A_1,A),
\end{equation*}
and with the notation in \eqref{eq:rel_bound} we get
\begin{equation*}
 \dfrob_s(A) \geq \rho_{\min}\left(\mu_s(P(A,\overline{{\ve v}})-{\ve 1}, L^\perp_A)-2-c(A_1,A)\right),
\end{equation*}
where $\rho_{\min}$ is the minimal positive entry in a representation
of $\overline{{\ve v}}=A{\ve \rho}$ with ${\ve
  \rho}\in\R^n_{>0}$. Now $\{{\ve x}\in\R^n_{\geq 0} : A{\ve
  x}=\overline{\ve v}\}$ is an $(n-m)$-dimensional polytope in $\R^n$,
and its vertices ${\ve u}$ are determined by $ A{\ve
  u}=\overline{\ve v}$ and $u_i=0$ for some $n-m$
coordinates. Hence, according to Cramer's rule the non-zero
coordinates are the ratio of two $(m\times m)$-minors of $A$, and a
suitable convex combination of $n-m+1$ affinely independent vertices gives us a representation
of $A\overline{\ve v}$ as an interior point. Thus $\rho_{\min}\geq \frac1{(n-m+1)}\frac{m(A)}{M(A)}$.
\end{proof}

\section{Proof of Theorem \ref{thm:main}}
\label{Optimal_lower_bound}

In  \cite{AFH}, a fundamental formula of Kannan \cite{Kannan} relating the
Frobenius number $\frob_1({\ve a})$ with the covering radius was extended to the
$s$-Frobenius number $\frob_s({\ve a})$ and the $s$-covering radius. In order to describe it, let for a given
primitive  positive vector ${\ve a}=(a_1,\dots,a_n)^\intercal
\in\Z^n_{>0}$
\begin{equation*}
S_{\ve a} = \left\{ {\ve x} \in \R_{\geq 0}^{n-1} : a_1\,x_1+\cdots +a_{n-1}\,x_{n-1}\leq 1 \right\}
\end{equation*}
be the $(n-1)$-dimensional simplex with vertices
${\ve 0},\frac{1}{a_i}{\ve e}_i$ where ${\ve e}_i$ is the $i$-th unit vector in
$\R^{n-1}$, $1\leq i\leq n-1$. Furthermore, we consider the following
sublattice of $\Z^{n-1}$
\begin{equation*}
L_{\ve a} = \left\{ {\ve z}\in\Z^{n-1} : a_1\,z_1+\cdots + a_{n-1}\,z_{n-1}\equiv 0 \bmod a_n \right\}.
\end{equation*}
Then
\begin{equation}
       \mu_s(S_{\ve a},L_{\ve a})= \frob_s({\ve a})+a_1+\cdots +a_n,
\label{extension_for_Kannan}
\end{equation}
which for $s=1$ is the above mentioned formula of Kannan.
So the problem to bound $\frob_s({\ve a})$ is equivalent to the study of
$\mu_s(S_{\ve a},L_{\ve a})$.

Set
\bea
\alpha_1=\frac{a_1}{a_n}\,,\ldots\,,\alpha_{n-1}=\frac{a_{n-1}}{a_n}\,.\eea

For
\bea S_{\ve \alpha}=a_n S_{\ve a}=\left\{{\ve x}\in \R^{n-1}_{\ge 0}:
\;\sum_{i=1}^{n-1}\alpha_i x_i \le 1\right\} \,\text{ and }
   L_u=a_n^{-1/(n-1)}L_{\ve a} \eea
we have
%$S_{\ve \alpha}=a_N S_{\ve a}$ and
%
\be \mu_s (S_{\ve a}, L_{\ve a})=a_n^{1+1/(n-1)}\mu_s (S_{\ve \alpha},
L_u)\,.\label{Vynos} \ee
Observe that $\det L_{\ve a}=a_n$ and hence $\det(L_u)=1$. Consequently,
\be \vartheta_s(S_{\ve \alpha})\le \mu_s (S_{\ve \alpha}, L_u)\,.
\label{S_alpha}\ee
The simplex $(\alpha_1 \cdots \alpha_{n-1})^{1/(n-1)}S_{\ve
\alpha}$ and the standard simplex $S_{n-1}=\{{\ve x}\in \R^{n-1}_{\ge 0}: x_1+\cdots+x_{n-1}\le 1\}$ are equivalent up to a linear transformation
of determinant $1$.
Therefore
\be \vartheta_s(S_{n-1})= \frac{\vartheta_s(S_{\ve \alpha})}{(\alpha_1 \cdots
\alpha_{n-1})^{1/(n-1)}}\,, \label{transform}\ee
and by \eqref{S_alpha}, \eqref{Vynos} and \eqref{extension_for_Kannan}
we have
\bea \vartheta_s(S_{n-1})\le \frac{\mu_s (S_{\ve \alpha}, L_u)}{(\alpha_1
\cdots \alpha_{n-1})^{1/(n-1)}}=\frac{\mu_s (S_{\ve a}, L_{\ve
a})}{a_n^{1+1/(n-1)}(\alpha_1 \cdots \alpha_{n-1})^{1/(n-1)}} \eea
\bea = \frac{\frob_s(a)+a_1+\cdots +a_n}{(a_1\cdots a_n)^{1/(n-1)}},\eea
which shows Theorem \ref{thm:main} i).

One of the main ingredients of the proof of Theorem \ref{thm:main} ii) is Theorem 1.2 in Aliev and Gruber \cite{AG} which is also implicit in Schinzel \cite{NewSL}. For completeness we give below the statement of this result.

\begin{theo}
For any lattice $L$ with basis ${\ve b}_1,\ldots,{\ve b}_{n-1}$,
${\ve b}_i\in\mathbb Q^{n-1}$, $i=1,\ldots,n-1$ and for all
rationals $\alpha_1, \ldots,\alpha_{n-1}$ with
$0<\alpha_1\le\alpha_2\le\cdots\le\alpha_{n-1}\le 1$, there exists
 a sequence
\bea {\ve a}(t)=(a_1(t),\ldots,a_{n-1}(t),a_{n}(t))\in\mathbb
Z^{n}\,, t=1,2,\ldots\,,\eea such that
$\gcd(a_1(t),\ldots,a_{n-1}(t),a_{n}(t))=1$ and the lattice $L_{{\ve
a}(t)}$ has a basis \bea{\ve b}_1(t),\ldots,{\ve b}_{n-1}(t)\eea
with
\be \frac{b_{ij}(t)}{k\,t}=b_{ij}+O\left(\frac{1}{t}\right)\,,\;\;\;
i,j=1,\ldots,{n-1}\,, \label{asympt_aij} \ee
where $k\in\mathbb N$ is such that $k\, b_{ij}, k\,
\alpha_j\,b_{ij}\in\mathbb Z$ for all $i,j=1,\ldots,{n-1}$.
Moreover,
\be a_{n}(t)=\det(L)k^{n-1}t^{n-1}+O(t^{n-2})\label{asympt_h} \ee
and
\be \alpha_i(t):=\frac{a_i(t)}{a_{n}(t)}
=\alpha_i+O\left(\frac{1}{t}\right). \label{asympt_alpha}\ee
\label{main_lemma}
\end{theo}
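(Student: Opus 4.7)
Fix $k\in\N$ such that $k\,b_{ij}$ and $k\,\alpha_j\,b_{ij}$ are integers for all $i,j$; after enlarging $k$ we may assume $q\mid k$, where $\alpha_j=p_j/q$ with $\gcd(p_1,\ldots,p_{n-1},q)=1$. Set $D:=k^{n-1}\det(L)\in\Z$, which is then divisible by $q$. The zeroth-order target is the sublattice $kt\,L\subset\Z^{n-1}$, with basis $kt\ve b_i$ and covolume $Dt^{n-1}$; however the quotient $\Z^{n-1}/(kt\,L)$ is generically not cyclic, so $kt\,L$ cannot itself equal $L_{\ve a(t)}$ for a primitive $\ve a(t)$, cyclicity being equivalent to $\gcd(\ve a(t))=1$.

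The strategy is to perturb: for each $t$, seek integer corrections $\ve\eta_i(t)$ of norm $O(1)$ so that the sublattice $M(t)\subset\Z^{n-1}$ generated by $\ve b_i(t):=kt\ve b_i+\ve\eta_i(t)$ has cyclic quotient. Cyclicity is a generic condition on the Smith normal form of the basis matrix $V(t)=(b_{ij}(t))$ (the locus where the largest elementary divisor equals $|\det V(t)|$), and a direct counting/density argument shows this locus has positive proportion among integer shifts of bounded norm, so suitable $\ve\eta_i(t)$ with $\|\ve\eta_i(t)\|_\infty\le E$ exist for some absolute constant $E$. Once $M(t)$ is cyclic, there is a primitive $\ve a(t)\in\Z^n$ with $a_n(t):=|\det V(t)|=Dt^{n-1}+O(t^{n-2})$ and $M(t)=L_{\ve a(t)}$; the residue vector $(a_1(t),\ldots,a_{n-1}(t))\bmod a_n(t)$ is determined only up to multiplication by a unit $u\in(\Z/a_n(t)\Z)^{*}$.

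The main obstacle is to coordinate the choices of $\ve\eta_i(t)$ and $u$ so that the resulting coefficients satisfy $a_j(t)\equiv u\cdot p_j\,a_n(t)/q\pmod{a_n(t)}$, which is a restatement of $a_j(t)/a_n(t)=\alpha_j+O(1/t)$; note that $q$ divides $a_n(t)$ up to an $O(t^{n-2})$ correction, so the right-hand side is well defined modulo $a_n(t)$. The $\asymp E^{(n-1)^2}$ admissible integer shifts, combined with the $\asymp a_n(t)$ unit choices, furnish enough freedom that a pigeonhole/Chinese-remainder argument produces a simultaneous solution for $E$ an absolute constant depending only on $n$ and the data $L$, $\ve\alpha$. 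Once the construction is effected, the three asymptotic estimates are immediate: $\ve b_i(t)=kt\ve b_i+O(1)$ gives (\ref{asympt_aij}); $a_n(t)=Dt^{n-1}+O(t^{n-2})$ is (\ref{asympt_h}); and $a_j(t)=(p_j/q)\,a_n(t)+O(t^{n-2})$ yields $\alpha_j(t)=p_j/q+O(1/t)=\alpha_j+O(1/t)$, which is (\ref{asympt_alpha}).
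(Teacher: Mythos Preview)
The paper does not supply its own proof of this theorem: it is quoted verbatim as Theorem~1.2 of Aliev--Gruber \cite{AG} (implicit also in Schinzel \cite{NewSL}), and only the statement is reproduced. The proof in those references is \emph{constructive}: one writes down explicit integer vectors ${\ve a}(t)$ and an explicit basis ${\ve b}_i(t)$ (built from the integers $k\,b_{ij}$ and $k\,\alpha_j b_{ij}$), and then verifies the three asymptotics directly. No counting or pigeonhole step is involved.

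Your plan, by contrast, is existential, and the decisive step does not go through. After you obtain a cyclic quotient $\Z^{n-1}/M(t)\cong\Z/a_n(t)\Z$, the residue vector $(a_1(t),\dots,a_{n-1}(t))$ is determined up to a \emph{single} unit $u\in(\Z/a_n(t)\Z)^*$. Multiplication by $u$ scales all coordinates simultaneously, so the ratios $a_j(t)/a_{j'}(t)\bmod a_n(t)$ are invariants of $M(t)$ and cannot be adjusted by the unit at all. Hence the unit freedom can repair at most one of the $n-1$ congruences $a_j(t)\approx \alpha_j a_n(t)$; the remaining $n-2$ ratio conditions $a_j(t)/a_1(t)\approx\alpha_j/\alpha_1$ must already hold for $M(t)$ itself. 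But you only allow $O(1)$ choices for the perturbations $\ve\eta_i(t)$ (a fixed box of side $E$), so as $t\to\infty$ you have a bounded supply of candidate lattices $M(t)$ against $n-2$ constraints each of relative precision $O(1/t)$. The pigeonhole count you sketch, $E^{(n-1)^2}\cdot\phi(a_n(t))$ versus $n-1$ constraints of density $\asymp 1/t$, therefore overcounts the genuine degrees of freedom by a factor $\asymp t^{n-2}$, and no CRT argument rescues it. (Separately, the assertion that cyclicity has positive density among shifts in a box of \emph{fixed} side $E$, uniformly in $t$, is itself nontrivial and not argued.)

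To make your approach work you would need either to let the perturbation window grow with $t$ and control how the adjugate columns of $V(t)$ move under such perturbations, or---much more cleanly---to build the $\alpha_j$ into the perturbation from the start (e.g.\ via the integers $c_i=k\sum_j\alpha_j b_{ij}$), which is essentially what the cited constructive proofs do.
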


Following Gruber \cite{Peter}, we say that a
sequence $S_t$ of convex bodies in $\R^{n-1}$ converges to a
convex body $S$ if the sequence of {\em distance functions} of $S_t$
converges uniformly on the unit ball in $\R^{n-1}$ to the distance
function of $S$. For the notion of convergence of a sequence of
lattices to a given lattice we refer the reader to
p.\ 178 of \cite{GrLek}.
\begin{lemma}[see Satz 1 in \cite{Peter}]
Let $S_t$ be a sequence of convex bodies in $\mathbb R^{n-1}$ which
converges to a convex body $S$ and let $L_t$ be a sequence of
lattices in $\mathbb R^{n-1}$ convergent to a lattice $L$. Then
\bea \lim_{t\rightarrow\infty}\mu_s(S_t,L_t)=\mu_s(S,L)\,. \eea
\label{limit}
\end{lemma}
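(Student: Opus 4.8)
The plan is to prove the continuity statement (Lemma \ref{limit}) by reducing it to the already-cited result of Gruber (Satz~1 in \cite{Peter}), which is stated there for the ordinary covering radius $\mu_1$, and then bootstrapping to arbitrary $s$. The key point is that $\mu_s(S,L)$ admits a ``finite'' characterization via \eqref{eq:s-covering}: $\mu_s(S,L)$ is the infimum of $\mu>0$ such that every translate $\ve x+\mu S$ contains at least $s$ points of $L$. So I would first establish two one-sided semicontinuity estimates and combine them.

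First I would prove lower semicontinuity: $\liminf_{t\to\infty}\mu_s(S_t,L_t)\ge\mu_s(S,L)$. Fix $\mu<\mu_s(S,L)$. By definition of $\mu_s$ there is a point $\ve x_0\in\spn_\R(L)$ such that $\ve x_0+\mu S$ contains at most $s-1$ points of $L$; since $S$ has nonempty interior and $L$ is discrete, there is in fact an $\varepsilon>0$ with $\#\big((\ve x_0+(\mu+\varepsilon)S)\cap L\big)\le s-1$ and moreover no lattice point lies on the boundary of $\ve x_0+(\mu+\varepsilon)S$ — so this count is ``stable'' under small perturbations of both $S$ and $L$. Because $S_t\to S$ in the distance-function sense and $L_t\to L$, for large $t$ the body $\ve x_0+(\mu+\varepsilon/2)S_t$ is sandwiched between $\ve x_0+\mu S$ and $\ve x_0+(\mu+\varepsilon)S$ up to an arbitrarily small error, and the lattice points of $L_t$ near $\ve x_0$ converge to those of $L$; hence $\ve x_0+\mu S_t$ contains at most $s-1$ points of $L_t$ for $t$ large, giving $\mu_s(S_t,L_t)\ge\mu$. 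Letting $\mu\uparrow\mu_s(S,L)$ yields the claim.

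Next I would prove upper semicontinuity: $\limsup_{t\to\infty}\mu_s(S_t,L_t)\le\mu_s(S,L)$. Write $\mu^*=\mu_s(S,L)$ and fix $\varepsilon>0$. The set $K=(\mu^*+\varepsilon)S$ has the property that \emph{every} translate contains at least $s$ lattice points of $L$, and by compactness of a fundamental domain of $L$ one can find a uniform ``margin'': there is $\delta>0$ such that for every $\ve x$ the translate $\ve x+(\mu^*+\varepsilon)S$ contains $s$ points of $L$ each of which lies in the $\delta$-shrunk interior, i.e. stays inside even after perturbing $S$ and $L$ slightly. Convergence of $S_t$ and $L_t$ then forces, for all large $t$ and all $\ve x$, that $\ve x+(\mu^*+\varepsilon)S_t$ still contains at least $s$ distinct points of $L_t$; hence $\mu_s(S_t,L_t)\le\mu^*+\varepsilon$. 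Letting $\varepsilon\downarrow 0$ finishes this direction, and combining the two gives $\lim_t\mu_s(S_t,L_t)=\mu_s(S,L)$.

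I expect the main obstacle to be the uniformity in $\ve x$ needed for upper semicontinuity: a priori the required translate and the ``margin'' $\delta$ could degenerate as $\ve x$ ranges over all of $\spn_\R(L)$. The standard fix is periodicity — it suffices to consider $\ve x$ in a compact fundamental domain of $L$ (all quantities are $L$-periodic in $\ve x$) — together with a compactness/continuity argument on that domain; this is exactly the mechanism underlying Gruber's Satz~1 for $s=1$, and the $s$-fold version goes through verbatim once one records that ``$\ge s$ lattice points in a translate'' is an open condition in the Chabauty-type topology on $(S,L)$ away from the boundary. A cleaner alternative, which I would mention, is simply to invoke Satz~1 of \cite{Peter} as a black box in the form already granted by the excerpt: since the statement is cited as covering the $s$-covering radius, no further argument is strictly needed, and the proof reduces to the remark that the definitions \eqref{eq:def_covering} and \eqref{eq:s-covering} agree and are translation-invariant, so the hypotheses of \cite[Satz 1]{Peter} apply directly to $S_t\to S$ and $L_t\to L$.
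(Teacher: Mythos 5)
The paper gives no proof of this lemma; it simply cites Gruber's Satz~1 in \cite{Peter}. Your proposal is therefore a genuinely different route: a self-contained proof via the two one-sided semicontinuity inequalities, which is correct in outline and has the advantage of not depending on whether Gruber's 1967 statement is already phrased for $s$-fold coverings or only for $s=1$. Two small points worth making explicit: in the lower-semicontinuity step, the existence of a translate with $\le s-1$ lattice points for $\mu<\mu_s(S,L)$ uses that $\mu_s$ is a \emph{minimum} as in \eqref{eq:def_covering}, not merely an infimum; and in the upper-semicontinuity step, the uniform margin for the $s$ witness points comes from $\mu^* S\subset\mathrm{int}\bigl((\mu^*+\varepsilon)S\bigr)$ with a gap controlled by $\varepsilon$, which needs the normalization $\ve 0\in\mathrm{int}(S)$ (available by translation invariance of $\mu_s$, and implicitly required anyway for the distance-function topology you invoke). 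With those noted, the argument goes through, and the periodicity and fundamental-domain compactness you describe is exactly the right mechanism for uniformity in $\ve x$. Your closing remark, that one may invoke \cite[Satz~1]{Peter} as a black box, is in fact the only justification the paper itself supplies; the rest of your write-up is an independent argument that a reader may well find more informative.
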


Without loss of generality, we may assume that ${\ve \alpha}\in \Q^{n-1}$ and
\be 0<\alpha_1<\alpha_2<\ldots <\alpha_{n-1}<
1\,.\label{conditions_on_alpha} \ee
For $\epsilon> 0$ we can choose a lattice $L_\epsilon$ of
determinant $1$ with
\be \mu_s(S_{\ve \alpha},L_\epsilon)<\vartheta_s(S_{\ve
\alpha})+\frac{\epsilon(\alpha_1\cdots\alpha_{n-1})^{1/(n-1)}}{2}\,.\label{Step_1}\ee
The inhomogeneous minimum is independent of translation and rational
lattices are dense in the space of all lattices. Thus, by Lemma
\ref{limit}, we may assume that $L_\epsilon\subset\Q^{n-1}$.
Applying Theorem \ref{main_lemma} to the lattice $L_\epsilon$ and
the numbers $\alpha_1, \ldots, \alpha_{n-1}$, we get a sequence
${\ve
a}(t)$, %${t\in{\mathcal P}}$,
satisfying (\ref{asympt_aij}), (\ref{asympt_h}) and
(\ref{asympt_alpha}). Note also that, by
(\ref{conditions_on_alpha}),
\bea 0<a_1(t)<a_2(t)<\ldots <a_{n}(t)\,\eea
for sufficiently large $t$.
%for $t$ large enough.
%

Observe that the identity (\ref{asympt_alpha}) implies
(\ref{Density}) with $a_i=a_i(t)$, $i=1,\ldots,n$, for $t$ large
enough. Let us show that, for sufficiently large $t$, the inequality
(\ref{Sharpness}) also holds.
Define a simplex $S_{{\ve \alpha}(t)}$ and a lattice $L_t$ by
\bea S_{t}=a_n(t)S_{{\ve
a}(t)}=\{(x_1,\ldots,x_{n-1})\in \R^{n-1}_{\ge 0}:\;
\sum_{i=1}^{n-1}\alpha_i(t) x_i \le 1\} \,, \eea
\bea L_t=a_n(t)^{-1/(n-1)}L_{{\ve a}(t)}\,. \eea

By (\ref{asympt_aij}) and (\ref{asympt_h}),
the sequence %of lattices
$L_t$ converges to the lattice $L_\epsilon$.
%.
%
Next, the point ${\ve p}=(1/(2n), \ldots, 1/(2n))$ is an inner point
of the simplex $S_{\ve \alpha}$ and all the simplicies $S_{t}$ for sufficiently large $t$. By (\ref{asympt_alpha}) and
Lemma \ref{limit}, the sequence $\mu_s(S_{t}-{\ve p},
L_t)$ converges to $\mu_s(S_{\ve \alpha}-{\ve p}, L_\epsilon)$.
Here we consider the sequence $\mu_s(S_{t}-{\ve p},
L_t)$ instead of $\mu_s(S_{t}, L_t)$ because the
distance functions of the family of convex bodies in Lemma \ref{limit}
need to converge on the unit ball.
Now, since the inhomogeneous minimum is independent of translation,
the sequence $\mu_s(S_{t}, L_t)$ converges to
$\mu_s(S_{\ve \alpha}, L_\epsilon)$. Consequently, by
(\ref{asympt_alpha}),
\bea \frac{\mu_s(S_{t},
L_t)}{(\alpha_1(t)\cdots\alpha_{n-1}(t))^{1/(n-1)}} \rightarrow
\frac{\mu_s(S_{\ve \alpha},
L_\epsilon)}{(\alpha_1\cdots\alpha_{n-1})^{1/(n-1)}}\,,\;\;\mbox{as}\;t\rightarrow\infty\,,\eea
and, by (\ref{extension_for_Kannan}), %(\ref{transform}) and
(\ref{Step_1}) and (\ref{transform}),
\bea \frac{\frob_s({\ve a}(t))+\sum_{i=1}^n a_i(t) }{(a_1(t)\cdots
a_{n}(t))^{1/(n-1)}}=\frac{\mu_s (S_{t},
L_t)}{(\alpha_1(t)\cdots
\alpha_{n-1}(t))^{1/(n-1)}}<\vartheta_s(S_{n-1})+\epsilon\,\eea
for sufficiently large $t$. This concludes the proof of Theorem
\ref{thm:main}

\section{Proof of Theorem \ref{upper_bound} and Theorem \ref{thm:lower_bound}}
\label{2theorems}

In Section \ref{sec:bounds} we have already proven the bounds for $\dfrob_s(A)$
in terms of the $s$-covering radius and it remains now to bound the $s$-covering radius itself.

\begin{lemma}
The inequality
\begin{equation*}
\mu_1(P(A,{\ve v})-{\ve 1}, L_A^\perp)\le\, \frac{n-m}{2}\sqrt{\det(AA^\intercal)}
%\label{upper_bound_for_dFN_previous}
\end{equation*}
holds.
\label{improved_upper_bound}
\end{lemma}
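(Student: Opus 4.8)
The plan is to bound the ordinary covering radius $\mu_1(P(A,{\ve v})-{\ve 1},L_A^\perp)$ via Jarnik's inequality \eqref{eq:jarnik}, which reduces the task to estimating the last successive minimum $\lambda_{n-m}$ of the $(n-m)$-dimensional body $K := P(A,{\ve v})-{\ve 1}$ with respect to the lattice $L_A^\perp$. Since $K$ need not be origin-symmetric, I would first replace it by its central symmetrization $\tfrac12(K-K)$, or better, work with the difference body, and observe that the covering radius of $K$ is controlled by that of a symmetric body containing a translate of $K$. A cleaner route: note that $P(A,{\ve v})$ is the polytope $\{{\ve x}\in\R^n_{\ge 0}: A{\ve x}={\ve v}\}$, which contains the all-ones vector ${\ve 1}$ in its relative interior (since $A{\ve 1}={\ve v}$), so $K=P(A,{\ve v})-{\ve 1}$ contains the origin in its relative interior and lies in the linear space $\spn_\R(L_A^\perp)$.

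The key geometric input is that $P(A,{\ve v})$ is contained in the simplex-like region cut out by $x_i\ge 0$ and $A{\ve x}={\ve v}$; concretely, each coordinate satisfies $0\le x_i$ and, summing appropriately, the polytope is bounded. I would exhibit an explicit origin-symmetric convex body $C^\star$ in $\spn_\R(L_A^\perp)$ with $K\subseteq {\ve 1}' + C^\star$ for a suitable translate, so that $\mu_1(K,L_A^\perp)\le \mu_1(C^\star,L_A^\perp)\le \tfrac{n-m}{2}\lambda_{n-m}(C^\star,L_A^\perp)$ by \eqref{eq:jarnik}. The natural candidate for $C^\star$ is (a scaling of) the projection of the cube $[-1,1]^n$ onto $\spn_\R(L_A^\perp)$, equivalently $\{{\ve z}\in\spn_\R(L_A^\perp): \|{\ve z}\|_\infty\le 1\}$ intersected with that space, since the constraints $-1\le x_i-1\le$ (something) bound $K$ coordinate-wise. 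Then I must estimate $\lambda_{n-m}(C^\star, L_A^\perp)$, and here I would use Minkowski's second theorem \eqref{eq:second_minkowski} together with $\det L_A^\perp=\sqrt{\det AA^\intercal}$ from \eqref{eq:det}, or a direct argument: the lattice $L_A^\perp$ has a basis whose vectors have $\ell_\infty$-norm controlled by the minors of $A$, but to get the clean bound $\tfrac{n-m}{2}\sqrt{\det AA^\intercal}$ without extra dimension-dependent constants I expect one needs a more careful choice of the bounding body — likely the body $\{{\ve x}: A{\ve x}={\ve 0},\ \sum|x_i|\le \text{const}\}$ together with the fact that $\lambda_{n-m}$ times a suitable width equals $\sqrt{\det AA^\intercal}$ up to the factor coming from the geometry of the standard simplex.

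A second, perhaps more direct approach that I would pursue in parallel: use the identity relating $P(A,{\ve v})$ to a simplex in the orthogonal complement (as done in Lemma \ref{lem:firstpart}, where $P = \{{\ve x}\in\R^{n-m}_{\ge 0}: A_1^{-1}A_2{\ve x}\le\overline{\ve 1}\}$), and bound the covering radius of that simplex directly. For a simplex with outer normals the $j$-th vertex of the dual is explicit, and the covering radius of a simplex $\Delta$ with respect to a lattice can be estimated by writing any point as a lattice point plus a point in $\Delta$ after rounding barycentric-type coordinates; the "loss" in each of the $n-m$ directions is at most $\tfrac12$ of the relevant edge length, giving the factor $\tfrac{n-m}{2}$, and the edge lengths scale like $\sqrt{\det AA^\intercal}$ after accounting for the lattice normalization. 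I expect the main obstacle to be precisely this bookkeeping: controlling the product of the simplex's "width" in the lattice metric by $\sqrt{\det AA^\intercal}$ uniformly, i.e., showing that the relevant successive minimum (or the rounding loss in the Kannan-type argument) does not pick up an extra factor depending on the minors or the dimension beyond the clean $\tfrac{n-m}{2}$. Resolving this likely requires the observation that $P(A,{\ve v})$, translated to the origin, contains a fundamental domain of $L_A^\perp$ scaled by a factor whose $(n-m)$-th root is at least $1/\sqrt{\det AA^\intercal}^{\,1/(n-m)}$ — which again reduces to a volume computation via $\vol(P(A,{\ve v}))$ versus $\det L_A^\perp$ and Minkowski's inequality \eqref{eq:second_minkowski}.
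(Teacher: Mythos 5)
You correctly identify the skeleton of the argument: reduce to Jarnik's inequality \eqref{eq:jarnik} via an origin-symmetric body contained in $P(A,{\ve v})-{\ve 1}$, and you even name the right body — the section $Q=[-1,1]^n\cap\lin L_A^\perp$, which sits inside $P(A,{\ve v})-{\ve 1}=\{{\ve x}:A{\ve x}={\ve 0},\,x_i\geq -1\}$ because the cube just adds the upper constraints $x_i\leq 1$. But you then explicitly flag that you don't see how to avoid extra dimension-dependent constants from Minkowski's second theorem, and you pivot toward other bodies and other lattice-width heuristics. That pivot is where the attempt stalls; the cube section was the right choice all along, and what's missing are two specific inputs.

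First, you need Vaaler's theorem (the paper cites \cite{Vaaler}): every $k$-dimensional central section of the cube $[-1,1]^n$ has $k$-volume at least $2^k$. This gives $\vol_{n-m}(Q)\geq 2^{n-m}$, with the constant $2^{n-m}$ exactly matching the $2^{n-m}$ on the right side of Minkowski's second theorem \eqref{eq:second_minkowski}. Second, you need the observation that $L_A^\perp\subset\Z^n$ consists of integer vectors, so every nonzero lattice vector has $\ell_\infty$-norm at least $1$ and hence $\lambda_i(Q,L_A^\perp)\geq 1$ for all $i$. Combining these, Minkowski's second theorem yields
\begin{equation*}
\lambda_{n-m}(Q,L_A^\perp)\;\leq\;\lambda_1\cdots\lambda_{n-m}\;\leq\;\frac{2^{n-m}\det L_A^\perp}{\vol(Q)}\;\leq\;\det L_A^\perp=\sqrt{\det(AA^\intercal)},
\end{equation*}
with no leftover dimensional factor, and then Jarnik's $\mu_1(Q,L_A^\perp)\leq\tfrac{n-m}{2}\lambda_{n-m}(Q,L_A^\perp)$ together with $Q\subseteq P(A,{\ve v})-{\ve 1}$ finishes the proof. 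Without Vaaler's volume lower bound and the integrality argument for $\lambda_i\geq 1$, the approach genuinely does not close, which is the gap your proposal leaves open.
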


\begin{proof}
Let $C^{n}=[-1,1]^n$ and $K=C^{n} \cap \lin L^\bot_A$.
By a well-known result of Vaaler \cite{Vaaler}, any $k$-dimensional section of the
cube $C^{n}$ has $k$-volume at least $2^k$. Since ${\ve 1}\in P(A, {\ve v})$, the
latter observation implies that the polytope $P(A, {\ve v})-{\ve 1}$ contains an
$(n-m)$-dimensional section $Q$ of the cube $C^n$ and, in particular,
\begin{equation}
\vol_{n-m}(P(A, {\ve v}))\geq \vol_{n-m}(Q)\ge 2^{n-m},
\label{eq:volume_poly}
\end{equation}
and by  \eqref{eq:jarnik}
\begin{equation}
  \mu_1(P(A,{\ve v})-{\ve 1}, L_A^\perp)\leq \mu_1(Q,
 L_A^\perp)\leq \frac{n-m}{2}\lambda_{n-m}(Q,L_A^\perp).
\label{eq:h1_new}
\end{equation}
All
vectors of the lattice $L_A^\perp$ are integral vectors, thus
 $\lambda_i(Q,L_A^\perp)\geq 1$, $1\leq i\leq n-m$. Hence
from \eqref{eq:second_minkowski} and \eqref{eq:volume_poly} we get
\begin{equation}
\lambda_{n-m}(Q,L_A^\perp)\leq \det
L_A^\perp,
\end{equation}
 and with \eqref{eq:h1_new} we are done.
\end{proof}

One can also  obtain a refinement of the bound
above.

\begin{lemma} The inequality
\begin{equation*}
\mu_1(P(A,{\ve v})-{\ve 1}, L_A^\perp)\le\, \frac{n-m}{2}\,M(A)
\end{equation*}
holds.
\label{lem:improvi}
\end{lemma}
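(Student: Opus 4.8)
The plan is to mimic the proof of Lemma~\ref{improved_upper_bound}, replacing the use of Vaaler's lower bound on sections of the cube by a direct lower bound on $\vol_{n-m}(P(A,{\ve v})-{\ve 1})$ in terms of the minors of $A$. Recall that $P(A,{\ve v})-{\ve 1}$ is an $(n-m)$-dimensional polytope lying in $\lin L_A^\perp$, and contains the origin in its relative interior since ${\ve 1}\in P(A,{\ve v})$. The key geometric observation is that the vertices of $P(A,{\ve v})-{\ve 1}$ are, up to translation, of the form $(A{\ve u})$ where ${\ve u}$ ranges over the vertices of $\{{\ve x}\in\R^n_{\ge 0}:A{\ve x}={\ve v}\}$, and, by Cramer's rule, the coordinates of such vertices are ratios of $m\times m$ minors of $A$, hence bounded in absolute value by $M(A)/m(A)\le M(A)$ (here $m(A)\ge 1$ since the minors are nonzero integers). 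This means $P(A,{\ve v})-{\ve 1}$ is contained in a translate of the cube $M(A)\,C^n$, but more usefully we want a \emph{lower} bound on its volume.

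For the volume lower bound, first I would note that $P(A,{\ve v})-{\ve 1}$ contains a simplex: pick $n-m+1$ affinely independent vertices ${\ve p}_0,\dots,{\ve p}_{n-m}$ of the polytope, whose convex hull is contained in $P(A,{\ve v})-{\ve 1}$. An alternative and cleaner route, paralleling the structure of the previous lemma, is to observe that $P(A,{\ve v})-{\ve 1}$ contains a parallelepiped spanned by suitably scaled lattice directions: since $\lambda_i(P(A,{\ve v})-{\ve 1},L_A^\perp)\ge \lambda_1\ge$ (something), and the primitive lattice vectors of $L_A^\perp$ that realize the successive minima give a sublattice, one can argue via Minkowski's second theorem \eqref{eq:second_minkowski} directly. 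Concretely, from $\lambda_i(P(A,{\ve v})-{\ve 1},L_A^\perp)\ge 1$ for all $i$ (all lattice vectors are integral), together with \eqref{eq:second_minkowski} one gets
\begin{equation*}
\lambda_{n-m}(P(A,{\ve v})-{\ve 1},L_A^\perp)\le \frac{\lambda_1\cdots\lambda_{n-m}\,\vol_{n-m}(P(A,{\ve v})-{\ve 1})}{2^{n-m}/(n-m)!}\cdot\frac{2^{n-m}}{(n-m)!\,\det L_A^\perp}\cdot(\cdots),
\end{equation*}
so the real content I need is a lower bound $\vol_{n-m}(P(A,{\ve v})-{\ve 1})\ge \frac{2^{n-m}}{\text{something}}\det L_A^\perp\big/M(A)^{n-m}$-type estimate. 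The natural candidate is $\vol_{n-m}(P(A,{\ve v})) \ge \det L_A^\perp / M(A)^{n-m}$ — this would follow because $P(A,{\ve v})$ contains a fundamental-domain-sized region of $L_A^\perp$ when rescaled by $M(A)$, since the polytope's inradius/width in lattice terms is controlled by $M(A)$.

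The main obstacle, and the step I would spend the most care on, is establishing the volume estimate $\vol_{n-m}(P(A,{\ve v})-{\ve 1})\ge 2^{n-m}\det(L_A^\perp)/M(A)^{n-m}$ with the right power of $M(A)$ so that the final bound comes out as $\frac{n-m}{2}M(A)$ rather than something weaker. The cleanest way is probably: the orthogonal projection forgetting the first $m$ coordinates maps $P(A,{\ve v})$ bijectively onto $P=\{{\ve x}\in\R^{n-m}_{\ge0}:A_1^{-1}A_2{\ve x}\le\overline{\ve 1}\}$ and $L_A^\perp$ onto $L=\{{\ve z}\in\Z^{n-m}:A_1^{-1}A_2{\ve z}\in\Z^m\}$ (as in Lemma~\ref{lem:firstpart}), and this projection scales $(n-m)$-volume by a fixed factor equal to $\det L_A / \det L = \det L_A^\perp/\det L$. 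Since $\det L$ divides $|\det A_1|\le M(A)$ (actually $\det L = |\det A_1|/\gcd(\text{...})$, in any case $\det L\le M(A)$), and $P$ contains the simplex with vertices ${\ve 0}$ and $\frac{1}{(A_1^{-1}A_2)\text{-row bound}}{\ve e}_i$ whose volume is at least $\frac{1}{(n-m)!}\big(\text{min over rows}\big)^{n-m}$, one assembles everything. Then applying \eqref{eq:jarnik} exactly as in \eqref{eq:h1_new}–\eqref{eq:volume_poly}, namely $\mu_1(P(A,{\ve v})-{\ve 1},L_A^\perp)\le \tfrac{n-m}{2}\lambda_{n-m}\le \tfrac{n-m}{2}\det(L_A^\perp)/\vol_{n-m}(P(A,{\ve v}))\cdot(\text{const})$, and inserting the volume bound, the $\det L_A^\perp$ cancels and one is left with $\tfrac{n-m}{2}M(A)$. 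I would double-check the exact constants and whether $m(A)$ also enters, but the structure is a direct refinement of the preceding lemma.
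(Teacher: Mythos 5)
Your proposal does not match the paper's proof and, as sketched, has a gap that blocks it. You aim to mimic Lemma~\ref{improved_upper_bound} by finding a good \emph{volume} lower bound for $P(A,{\ve v})-{\ve 1}$ and then running the Jarnik--Minkowski chain. But there are two problems. First, Minkowski's second theorem requires an origin-symmetric body; the polytope $P(A,{\ve v})-{\ve 1}$ is not symmetric, and in Lemma~\ref{improved_upper_bound} the paper carefully switches to the symmetric cube section $Q=C^n\cap\lin L_A^\perp$ for exactly this reason. You never address this. Second, and more seriously, the constants do not come out. From $\lambda_i\geq 1$ and \eqref{eq:second_minkowski} applied to a symmetric body $K$ one only gets $\lambda_{n-m}\leq 2^{n-m}\det L_A^\perp/\vol(K)$; to reach $\lambda_{n-m}\leq M(A)$, and hence $\mu_1\leq\frac{n-m}{2}M(A)$ via Jarnik, you would need $\vol(K)\geq 2^{n-m}\det L_A^\perp/M(A)$. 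By Cauchy--Binet, $\det L_A^\perp=\bigl(\sum_J(\det A_J)^2\bigr)^{1/2}$ can be as large as $\sqrt{\binom{n}{m}}\,M(A)$, so you would need a section of volume up to $\sqrt{\binom{n}{m}}\cdot 2^{n-m}$, which neither Vaaler nor the crude ``inradius controlled by $M(A)$'' picture delivers. The target you actually write, $\vol\geq 2^{n-m}\det L_A^\perp/M(A)^{n-m}$, only yields $\lambda_{n-m}\leq M(A)^{n-m}$ and hence the much weaker $\mu_1\leq\frac{n-m}{2}M(A)^{n-m}$.

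The paper's proof of Lemma~\ref{lem:improvi} takes a different and more elementary route, bypassing volumes and Minkowski entirely. It constructs, for each $k=m+1,\dots,n$, the explicit ``cofactor'' vector ${\ve z}_k\in L_A^\perp$ whose nonzero entries are $\pm$ various $m\times m$ minors of $A$ (so $\|{\ve z}_k\|_\infty\leq M(A)$), shows that the cross-polytope with vertices ${\ve c}\pm\frac{n-m}{2}{\ve z}_k$ (where ${\ve c}=\frac{n-m}{2}M(A){\ve 1}$) is contained in $\frac{n-m}{2}M(A)\,P(A,{\ve v})$, that this cross-polytope contains a lattice parallelepiped for the full-rank sublattice spanned by the ${\ve z}_k$, and that any translate of a full-rank lattice parallelepiped must contain a lattice point; this gives the covering radius bound directly. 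If you insist on a Jarnik-style refinement of Lemma~\ref{improved_upper_bound}, the missing idea is not a volume estimate but precisely these vectors: since ${\ve z}_{m+1},\dots,{\ve z}_n$ are linearly independent (each ${\ve z}_k$ has nonzero $k$-th coordinate $-\det A_1$ and zero in coordinates $m+1,\dots,n$ other than $k$), lie in $L_A^\perp$, and satisfy $\|{\ve z}_k\|_\infty\leq M(A)$, they all lie in $M(A)\,Q$, giving $\lambda_{n-m}(Q,L_A^\perp)\leq M(A)$ outright and then $\mu_1\leq\frac{n-m}{2}\lambda_{n-m}\leq\frac{n-m}{2}M(A)$ by \eqref{eq:jarnik}. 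That is the clean replacement for the chain \eqref{eq:h1_new}--\eqref{eq:volume_poly}; your proposal never introduces these vectors and therefore does not close the argument.
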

\begin{proof}
For an $m$-subset $J\subset\{1,\dots,n\}$ let $A_J$ be the submatrix
of $A$ consisting of the columns with index set $J$.
 Let  $I=\{1,\dots,m\}$ and  without loss of generality let $\det
 A_I\ne 0$.  Then, for $k=m+1,\dots,n$,  the
  $1$-dimensional subspace $\{{\ve x}\in\R^{n}: A\,{\ve x}=0,\,x_i=0\text{ for }
  i\notin I\cup\{k\}\}$ is
  generated by the vector ${\ve z}_k\in\Z^{n}$ with
\begin{equation*}
  {\ve z}_{ki}=(-1)^{m-i}\det A_{\{1,\dots,i-1,i+1,\dots,m,k\}},\, 1\leq i\leq
  m,\quad {\ve z}_{kk}=-\det A_I,
\end{equation*}
and zero otherwise. Clearly,  ${\ve z}_k\in L_A^\bot$ for $k=m+1,\dots,n$.

We have to show that
\begin{equation*}
\left({\ve x} + \frac{n-m}{2}M(A) \left(P(A,{\ve v})-{\ve 1}\right)\right)\cap L_A^\perp
\neq \emptyset
\end{equation*}
for all ${\ve x}\in\lin L_A^\perp$.

To this end let ${\ve c}=\frac{n-m}{2}M(A){\ve 1}$. Due
to the maximality of $M(A)$ we have  ${\ve c}\pm \frac{(n-m)}{2}\,{\ve
  z}_k\in\R^n_{\geq 0}$ and so  ${\ve c}\pm \frac{(n-m)}{2}\,{\ve
  z}_k\in \frac{n-m}{2}M(A)P(A,{\ve v})$ for $k=m+1,\dots, n$. Hence
the convex hull $C$ of these points, i.e.,
\begin{equation*}
       C= \left\{ {\ve c}+\frac{(n-m)}{2}\sum_{k=m+1}^{n} \mu_{k}\,\pm{\ve z}_{k}:
          \sum_{k=m+1}^n\mu_k=1,\mu_k\geq 0  \right\}
\end{equation*}
is contained in $\frac{n-m}{2}M(A)P(A,{\ve v})$.
Now this cross-polytope $C$  contains the parallelepiped
\begin{equation*}
\left\{ {\ve c}\pm\rho_{m+1} \frac{1}{2}\,{\ve z}_{m+1}\pm
          \cdots \pm\rho_{n} \frac{1}{2}\,{\ve z}_{n} :
          \rho_k\in[0,1] \right\},
\end{equation*}
which shows that $\frac{n-m}{2}M(A) \left(P(A,{\ve v})-{\ve 1}\right)$
contains  a translation of the lattice (w.r.t. $L_A^\perp$) parallelepiped
$U=\{\sum_{k=m+1}^m \rho_k {\ve z}_{k}: 0\leq \rho_k\leq 1\}$.
Since  any
translated lattice parallelepiped in $\lin L_A^\perp$ must contain a
lattice point of $L_A^\perp$  we know that ${\ve x} +
\frac{n-m}{2}M(A) \left(P(A,{\ve v})-{\ve 1}\right)$ contains a point
of $L_A^\perp$ for all ${\ve x}\in \lin L_A^\perp$.
\end{proof}

Now we are ready to complete the proof of Theorem \ref{upper_bound}

\begin{proof}[Proof of Theorem \ref{upper_bound}] By Lemma
  \ref{lem:inhom_min} and \eqref{lem:s-covering} we have
\begin{equation*}
\begin{split}
  \dfrob_s(A)&\leq \mu_s(P(A,{\ve v})-{\ve 1}, L_A^\perp)\\&\leq
\mu_1(P(A,{\ve v})-{\ve 1},L_A^\perp) + (s-1)^\frac{1}{n-m}\left(\frac{\det L_A^\perp}{\vol(P(A,{\ve v}))}\right)^\frac{1}{n-m}.
\label{aux1}
\end{split}
\end{equation*}
Together with Lemma \ref{improved_upper_bound} and the lower bound on $\vol(P(A,
{\ve v}))$ given in \eqref{eq:volume_poly} we obtain the inequality \eqref{upper_bound_for_dFN}.
\end{proof}

Observe that using Lemma \ref{lem:improvi} instead of Lemma
\ref{improved_upper_bound}  results in the  better but less
nice bound
\begin{equation*}
 \dfrob_s(A) \leq
\frac{n-m}{2}M(A) + \frac{(s-1)^\frac{1}{n-m}}{2}\,\left(\sqrt{\det(AA^\intercal)}\right)^\frac{1}{n-m}.
\end{equation*}

Finally we give the proof of  Theorem \ref{thm:lower_bound}

\begin{proof}[Proof of Theorem \ref{thm:lower_bound}] In view of
  Lemma \ref{lem:lower_mu} and \eqref{lem:s-covering}  we have
\begin{equation*}
\dfrob_s(A) \geq \frac{1}{n-m+1}\,
\frac{m(A)}{M(A)}\left(s^\frac{1}{n-m}\left(\frac{\sqrt{\det AA^\intercal}}{\vol
    (P(A,\overline{\ve v}))}\right)^\frac{1}{n-m}-2-c(A_1,A)\right).
\end{equation*}
Thus we need an upper  bound on $c(A_1,A)$ and on
$\vol(P(A,\overline{{\ve v}}))$. With Cramer's rule each entry of
$A_1^{-1} A_2$ is of the form $\det A_I/\det A_1$, where $A_I$ is a
certain $m\times m$ minor. Consequently, we have 
\begin{equation}
c(A_1,A) \leq (n-m)\frac{M(A)}{m(A)}.
\label{eq:bound_c}
\end{equation}
Let ${\ve u}\in\R^n$ be a vertex of $P(A,\overline{\ve v})$. As in the proof of
Lemma \ref{lem:lower_mu} we may argue that each of the $m$ non-zero
entries of ${\ve u}$ is bounded by $M(A)/m(A)$. So $P(A,\overline{\ve
  v})$ is contained in an $n$-dimensional cube of edge length $M(A)/m(A)$.
By \cite{Ball}, the largest volume of an $(n-m)$-dimensional section of a
cube of dimension $n$ with edge length $\sigma$
is $\left(\sigma\sqrt{\frac{n}{n-m}}\right)^{{n-m}}$ which finishes the proof.
\end{proof}

\section{The case $m=n-1$. Proof of Theorem \ref{formula}}
\label{m=n-1}

Recall that an integer vector ${\ve z}=(z_1,z_2,\ldots,z_n)$ is called {\em primitive} if $\gcd(z_1,z_2,\ldots,z_n)=1$.
In the case $m=n-1$ the lattice $L_A^\perp$ has dimension one, so
it is generated by a primitive vector ${\ve z}\in\Z^n$.
Following \cite{PRS}, we put
\begin{equation*}
   {\bf u}=\sum_{i: z_i\geq 0} z_i\,{\bf a}_i=-\sum_{i: z_i\leq 0} z_i\,{\bf a}_i.
\end{equation*}
It is shown in \cite[Theorem 6.1]{PRS} that all points in
the interior of $C+{\ve u}-{\ve v}$ admit a non-negative integral
representation, but that, all the lattice points in the facets, which are
contained in the lattice generated  by those ${\ve a}_i$'s contained in the
facet, are 
non-reachable. Hence we have
\begin{equation*}
\dfrob_1(A)=\min\{\gamma\in\R_{\geq 0}: \gamma\,{\ve v}\in C+{\ve u}-{\ve v}\}.
\end{equation*}
Therefore
\begin{equation*}
\dfrob_1(A)=\min\{\gamma\in\R_{\geq 0}: \exists\, \rho\in\R\quad
 \text{s.t.}\quad  (\gamma+1)\,{\ve 1} -(z_i)^\intercal_{z_i\geq 0} + \rho\,{\ve z}\geq 0\}.
\end{equation*}
Here $(z_i)^\intercal_{z_i\geq 0}\in\R^n$ denotes the vector
consisting of the nonnegative entries of ${\ve z}$ (the others are
$0$). So we may write ${\ve z}=(z_i)^\intercal_{z_i\geq
  0}+(z_i)^\intercal_{z_i\leq 0}$ and we are interested in the minimal
$\gamma\in\R_{\geq 0}$ such that there exists a $\rho$ with
\begin{equation*}
\gamma+1\geq (1-\rho)\, z_i, \text{ for } z_i\geq 0 \quad \text{ and }
 \gamma+1\geq \, -\rho\,z_i, \text{ for }z_i\leq 0.
\end{equation*}

Recall that we put $z^+=\max\{z_i: z_i> 0\}$ and
$z^-=\max\{|z_i|: z_i< 0\}$.
 Then
\begin{equation*}
\gamma+1= \min_\rho \max\{(1-\rho)\, z^+, \rho\,z^-\}.
\end{equation*}
The first function is decreasing in $\rho$ whereas the second is
increasing in $\rho$. Both coincide for $\rho=z^+/(z^++z^-)$ and so we
have proved Theorem \ref{formula}.

\smallskip
As mentioned in the introduction, it was also shown in \cite{PRS} that
the $C+{\ve u}-{\ve v}$ is the unique maximal cone with the property that all points in
the interior  admit a non-negative integral
representation, but infinitely many on the boundary do not have such a
representation. This seems to be a very particular property of the
case $m=n-1$ as it was already pointed out in \cite{Amosetal}. We want
 to conclude the paper  with another example showing that, in general,
 we have more than one maximal cone.

\begin{figure}[htbp]
\includegraphics[scale=0.7]{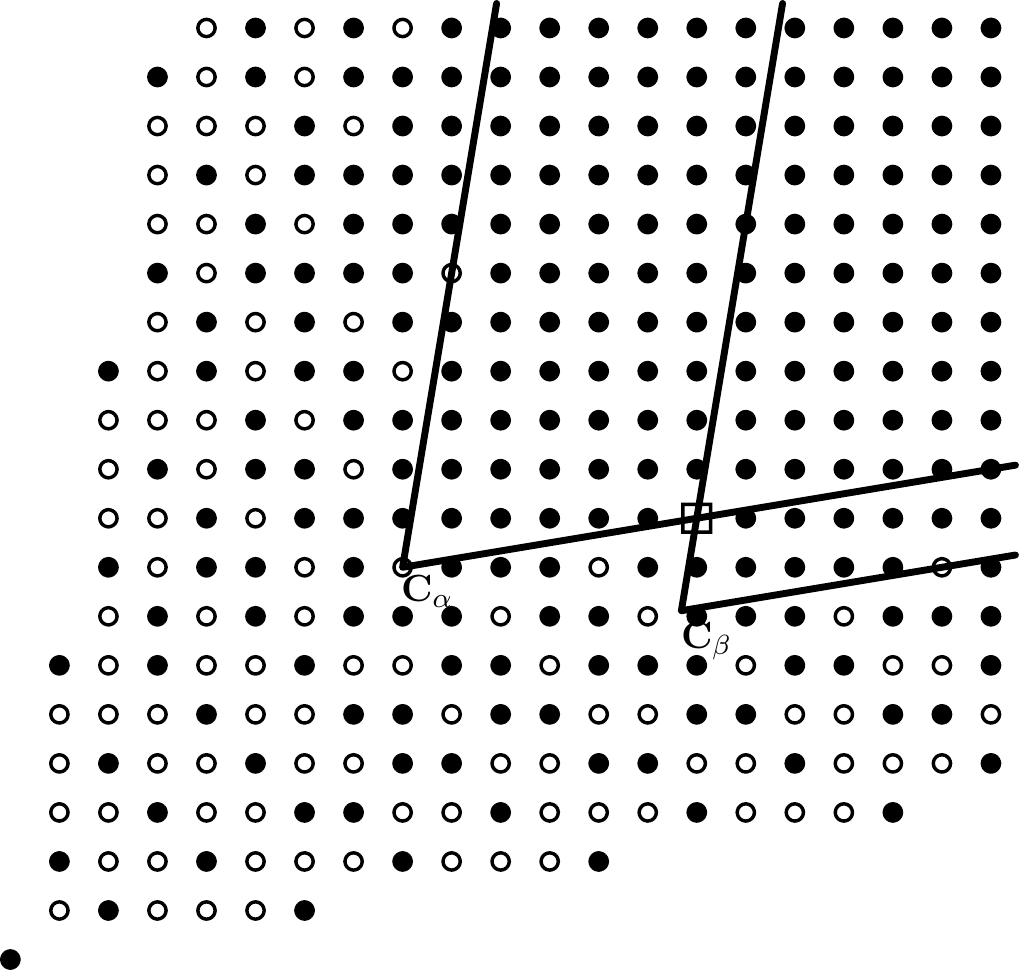}
\caption{Non-uniqueness
  for $m=n-2$.}
\label{fig1}
\end{figure}

For $k\in\N$ let
\begin{equation*}
A=\begin{pmatrix}1&2&1&3k\\2&1&3k&1\end{pmatrix}.
\end{equation*}
Then the cone $C$ (see Figure \ref{fig1} for $k=2$)  is generated by the two vectors $(1,3k)^\intercal$ and
$(3k,1)^\intercal$.
By elementary calculations, we get, that $\binom{9k-4}{6k-3}\not\in {\mathcal F}_1(A)$
but all other integral points in $\binom{9k-4}{6k-3}+\conv\{\binom{0}{0},\binom{1}{3k},\binom{3k}{1},\binom{3k+1}{3k+1}\}$
are contained in ${\mathcal F}_1(A)$.

This implies, that all integral points but $\binom{9k-4}{6k-3}$ in
\[\binom{9k-4}{6k-3}+C=\left\{\binom{x}{y}\in\R^2:
\begin{array}{rl}3kx-y&\geq 3(3k-1)^2,\\-x+3ky&\geq 2(3k-1)(3k-2)\end{array}\right\}\] are
contained in ${\mathcal F}_1(A)$. Now let
 \[\alpha=\min\{t\in\Z:
\left\{\binom{x}{y}\in\Z^2: \begin{array}{rl}3kx-y&> t\\ -x+3ky&>
    2(3k-1)(3k-2)\end{array}\right\}\subset {\mathcal F}_1(A)\}\]
and
\[\beta=\min\{t\in\Z:
\left\{\binom{x}{y}\in\Z^2: \begin{array}{rl}3kx-y&> 3(3k-1)^2\\ -x+3ky&> t\end{array}\right\}\subset {\mathcal F}_1(A)\}.\]
Then it is clear, that $\alpha< 3(3k-1)^2$ and $\beta<2(3k-1)(3k-2)$. This gives us two cones, namely
\[
\begin{split}
C_\alpha &=\left\{\binom{x}{y}\in\R^2:\begin{array}{rl}3kx-y&\geq \alpha,\\-x+3ky&\geq 2(3k-1)(3k-2)\end{array}\right\}\\
C_\beta &=\left\{\binom{x}{y}\in\R^2:\begin{array}{rl}3kx-y&\geq
    3(3k-1)^2,\\-x+3ky&\geq \beta\end{array}\right\},
\end{split}
\]
that are maximal w.r.t.\ inclusion with the property, that all
integral points in their interiors are in ${\mathcal F}_1(A)$.
By symmetries, the same is true for $\binom{6k-3}{9k-4}$.

For $k=2$ the two maximal cones $C_\alpha$ and $C_\beta$ are depicted  in Figure \ref{fig1}. Black dots
are in ${\mathcal F}_1(A)$, circles/squares with white interior are
not. The square represents the point
$\binom{14}{9}=\binom{9k-4}{6k-3}$ for $k=2$.

\end{document}